\documentclass[10pt,journal,twocolumn,oneside,letterpaper,final]{IEEEtran}

\usepackage{amsmath,amssymb,amsthm}
\usepackage{paralist}	
\usepackage{graphicx}
\usepackage{bm} 
\usepackage[]{algorithm2e} 
\usepackage[caption=true]{subfig} 

\usepackage[colorlinks=true]{hyperref}
\hypersetup{urlcolor=blue, citecolor=red}

\newtheorem{theorem}{Theorem}
\newtheorem{corollary}[theorem]{Corollary}
\newtheorem{remark}[theorem]{Remark}
\newtheorem{definition}[theorem]{Definition}
\newtheorem{lemma}[theorem]{Lemma}

\DeclareMathOperator{\arccosh}{arccosh}

\begin{document}
\title{ORKA: Object reconstruction using a K-approximation graph}
\author{Florian Bossmann$^{1,3}$ and Jianwei Ma$^{1,2}$}
	
\maketitle
\begin{abstract} 
Data processing has to deal with many practical difficulties. Data is often corrupted by artifacts or noise and acquiring data can be expensive and difficult. Thus, the given data is often incomplete and inaccurate. To overcome these problems, it is often assumed that the data is sparse or low-dimensional in some domain. When multiple measurements are taken, this sparsity often appears in a structured manner.

We propose a new model that assumes the data only contains a few relevant objects, i.e., it is sparse in some object domain. We model an object as a structure that can only change slightly in form and continuously in position over different measurements. This can be modeled by a matrix with highly correlated columns and a column shift operator that we introduce in this work.

We present an efficient algorithm to solve the object reconstruction problem based on a $K$-approximation graph. We prove optimal approximation bounds and perform a numerical evaluation of the method. Examples from applications including Geophysics, video processing, and others will be given.
\end{abstract}
\thanks{\noindent$^1$Harbin Institute of Technology, School of Mathematics, Harbin China.\\
$^2$Peking University, School of Earth and Space Science, Beijing, China.\\
$^3$Supported by NSFC grant 42004109 (Seismic data interpolation using wave model decomposition).}

\section{Introduction}

Real world data processing involves many challenges. One of the most common problem that appears in nearly every application is dealing with noisy or incomplete data. Thus, in a first step the data is often processed to remove noise and fill gaps such that the result fits more into the assumed underlying model. This idea can be expressed mathematically as
\begin{align}\label{min1}
	\min\limits_{\bm{X}}\|\bm{D}-\bm{X}\|+\mu\mathcal{R}(\bm{X})
\end{align}
where $\bm{D}$ is the observed real-world data and $\bm{X}$ is the denoised version. The first term, called data fidelity term, controls how much $\bm{X}$ fits to the observed data. Typically, the $2$-norm squared (for vectors) or the Frobenius norm squared (for matrices) is used here. In the second term is called the regularizing term. Here, the operator $\mathcal{R}$ measures the "distance" of $\bm{X}$ to the underlying model and normally only returns positive values (e.g., $\mathcal{R}$ could be a norm of any kind). The operator and model are chosen depending on the application. The parameter $\mu$ balances both terms and should be chosen larger the more noisy the data is. As an alternative to (\ref{min1}) the regularizer can be implemented as inequality
\begin{align}\label{min2}
	\min\limits_{\bm{X}}\|\bm{D}-\bm{X}\|\ \ \ \text{s.t.}\ \ \ \mathcal{R}(\bm{X})\leq\varepsilon
\end{align}
with $\varepsilon\geq0$. While (\ref{min1}) returns the best compromise between model and observation, (\ref{min2}) aims for the best fit to the observation that is within a certain radius to the model space or in the extreme case $\varepsilon=0$ completely fits the model.

One model assumption that has been proven useful in many applications is the concept of sparsity. Here, one assumes that the result $\bm{X}$ only contains a few non-zero elements. Or, more general, one assumes that $\bm{X}$ has a sparse representation in some domain. The latter case can be rewritten as the first case by setting $\bm{X}=\bm{AY}$ with some (linear) operator $\bm{A}$ and then forcing sparsity on $\bm{Y}$. As a typical example, in many applications sparsity under a Wavelet transformation is assumed \cite{Mallat99}. But also low-rank matrices can be considered sparse with respect to their singular values \cite{Zhou14}. Compressed sensing considers sparse representations in more general frames. A good overview can be found in \cite{Foucart13}.

In many applications, the data is not only recorded by one, but by multiple sensors at different positions \cite{Chen06}. Or similarly, the same measurement is not only done once, but repeated at different points in time \cite{Do09}. This way, one gains information about the underlying change over time/position. The data is normally written in a matrix $\bm{D}$ where each column represents one measurement. The above mentioned concepts can easily be adapted by using two dimensional frames such as Curvelets \cite{Ma10} or Shearlets \cite{Kutyniok12}. However, in this multiple measurement scenario the new dimension can be used to expand the sparsity concept. For most applications it is reasonable to assume that the measurements do not change randomly over time/position, but rather follow some underlying laws (e.g., physical restrictions). Thus, the sparsity pattern itself also should not change randomly but is indeed structured. Exemplary, the matrix $\bm{X}$ can be row-sparse \cite{Wu14} (i.e., only few non-zero rows) or block-sparse \cite{Eldar10} (i.e., non-zero entries group together in blocks). This structured sparsity is used in various applications such as machine learning \cite{Huang11,Wen16}, face recognition \cite{Jia12}, and seismic data processing \cite{Wu20}.

We can further classify each model into adaptive and non-adaptive methods. Non-adaptive methods work with a fixed model space while adaptive methods adjust the underlying model to fit more to the data. Exemplary, the model could assume $\bm{X}=\bm{AY}$ and $\bm{Y}$ sparse. Then the operator $\bm{A}$ can be chosen fixed (Wavelets, Curvelets, Shearlets,$\ldots$) or it can be chosen adapted to the data \cite{Yu16,Tosic11}. The latter case is known as dictionary learning. In the multiple measurement case, the underlying sparsity structure can also assumed to be fixed (like row- or block-sparse) or adapted to the data \cite{Plonka10,Bossmann21}. While adaptive methods often show a better performance for the designed application, the adaptivity increases runtime and/or storage costs. Non-adaptive methods on the other hand, are easier to analyze theoretically and often perform good on a wide range of applications.

In a previous work \cite{Bossmann20} the authors introduced a shifted rank-1 model for the multiple measurement case.
This model basically combines the idea of low-rank approximation with adaptive structured sparsity and is related to shift-invariant dictionary learning \cite{Rusu13}. The idea in short is as follows.
We assume that each measurement contains the same few objects, but the object position might shift throughout the data. This is modeled as $\mathcal{S}(\bm{uv^*})$ with a rank-1 matrix $\bm{uv^*}$ and a shift operator $\mathcal{S}$ that we will re-introduce in the next section. This model suits various applications. In seismic \cite{Sundman13} and ultrasonic \cite{Basarab13} measurements the same waves (objects) are recorded at different arrival times (positions) depending on the probes position. Video recordings can be interpreted as multiple measurements where each frame is one measurement \cite{Do09}. Then objects (cars, pedestrians,$\ldots$) within the video fit the model assumptions nearly perfectly. Recently, the shifted rank-1 model also has been applied to machine health monitoring \cite{Zhou22}. Other applications are e.g., real-time dynamic MRI \cite{Vaswani10}, dynamic PET \cite{Heins15}, wireless communication \cite{Lian19}, and more \cite{Bossmann20}. A main disadvantage of the shifted rank-1 model is, that the object stays constant over all measurements, i.e., it cannot change form or size. While this is in some cases a reasonable simplification, it does not hold in practice. Cars in videos get smaller as they drive off, seismic waves may vary depending on the subsurface nature, etc..

In this work, we introduce a new model that overcomes these problems. Instead of combining the shift operator with a rank-1 matrix $\bm{uv^*}$, we use a matrix $\bm{U}$ with highly correlated columns. This allows the object to change form and size over the measurements, where the amount of change can be tuned with a regularization parameter. The resulting minimization problem (which is a mix of (\ref{min1}) and (\ref{min2})) can be approximated by constructing a special graph that we call $K$-approximation graph, where $K$ is a constant that defines the approximation rate. We prove that the approximation error decreases exponentially in $K$ but the graph grows exponentially in $K$. The resulting algorithm ORKA (object reconstruction using $K$-approximation) scales only linear in data size, but exponential in $K$. Thus, $K$ must be chosen to balance approximation error and complexity.

The remainder of this work is organized as follows. In the next section we introduce the notations used throughout the work. We also give a short recap on our work \cite{Bossmann20}, which was the motivation of the here presented results. In Section 3 we discuss our model and the according minimization problem. We also reformulate the minimization into a more useful form that can be handled by the algorithm later on. Section 4 derives the $K$-approximation graph for the problem. We also prove approximation error bounds in this section and introduce the ORKA algorithm in the end. Finally, in Section 5 we evaluate the algorithm numerically and demonstrate its usage on different applications.

\section{Notations and Preliminary}

In this Section introduce the notation used and define operators and functions that are needed in the following sections. We also briefly comment on the shifted rank-1 model that we developed in our previous work
\cite{Bossmann20}.

Matrices will be denoted by bold capital letters $\bm{A}$, $\bm{B}$. Vectors are denoted by bold lowercase letters $\bm{u}$, $\bm{v}$. Non-bold lowercase letters represent scalar values, capital letters stand for constants. Finally, operators and functions will be written in calligraphic $\mathcal{S}$.

We use lower indexing to address elements of a matrix or vector. The index is replaced with $:$ whenever the complete column/row is addressed. For example, $\bm{A}_{jk}$  is the element in the $j$-th row and $k$-th column of $\bm{A}$; $\bm{A}_{:k}$ is the $k$-th column of the matrix and $\bm{A}_{j:}$ the $j$-th row. We denote the Frobenius norm for matrices by $\|\cdot\|_F$ and the euclidean norm for vectors by $\|\cdot\|_2$. The according inner product is written as $\langle\cdot,\cdot\rangle$ for both matrices (Frobenius inner product) and vectors (euclidean inner product). For matrices $\bm{A},\bm{B}\in\mathbb{R}^{M\times N}$ and vectors $\bm{u},\bm{v}\in\mathbb{R}^N$ the norms and inner products are defined as
\begin{align*}
	\|\bm{A}\|_F=\sqrt{\sum\limits_{j,k=1}^{M,N}\bm{A}_{jk}^2}, &&
	\langle\bm{A},\bm{B}\rangle=\sum\limits_{j,k=1}^{M,N}\bm{A}_{jk}\bm{B}_{jk},
	\\
	\|\bm{u}\|_2=\sqrt{\sum\limits_{j=1}^{N}\bm{u}_j^2},
	&&
	\langle\bm{u},\bm{v}\rangle=\sum\limits_{j=1}^{N}\bm{u}_j\bm{v}_j.
\end{align*}

Throughout this work, we will denote the given data matrix as $\bm{D}\in\mathbb{R}^{M\times N}$ and the approximation matrix as $\bm{U}\in\mathbb{R}^{M\times N}$. The identity matrix is written as $\bm{I}$. Furthermore, we need the shift matrix
\begin{align*}
	\bm{J}=\begin{pmatrix}
		0 & \cdots & 0 & 1 \\
		1 & \ddots & \vdots & 0 \\
		0 & \ddots & \ddots & \vdots \\
		\ddots &	0 & 1 & 0
	\end{pmatrix}
\end{align*}
and the matrix
\begin{align*}
	\bm{T}=\begin{pmatrix}
		1 & -1 & 0 & \cdots \\
		-1 & 2 & -1 & \ddots \\
		0 & -1 & \ddots & \ddots \\
		\vdots & \ddots & \ddots & \ddots & \ddots \\
		& & & -1 & 2 & -1 \\
		& & &  0 & -1 & 1
	\end{pmatrix}.
\end{align*}
Although $\bm{T}$ appears in a slightly different context in this work, it is actually a second order centered differences matrix with constant boundary conditions.

Last, we introduce the shift operator that we already used in \cite{Bossmann20} (Definition 2.1 and Theorem 2.2).
\begin{definition}\label{def:shiftOperator}
For an integer vector $\bm{\lambda}\in\mathbb{N}^N$, define $\mathcal{S}_{\bm{\lambda}}:\mathbb{R}^{M\times N}\rightarrow\mathbb{R}^{M\times N}$ as
\begin{align*}
\mathcal{S}_{\bm{\lambda}}(\bm{A})=\begin{pmatrix}
\bm{J}^{\bm{\lambda}_1}\bm{A}_{:1} &
\bm{J}^{\bm{\lambda}_2}\bm{A}_{:2} & \cdots &
\bm{J}^{\bm{\lambda}_N}\bm{A}_{:N}
\end{pmatrix}.
\end{align*}
The operator $\mathcal{S}_{\bm{\lambda}}$ shifts the $k$-th column of a matrix by $\bm{\lambda}_k$. The shift operator has the following properties that we need throughout this work:
\begin{align*}
\mathcal{S}_{\bm{\lambda}}^{-1}=\mathcal{S}_{-\bm{\lambda}},
&&
\mathcal{S}_{\bm\lambda}(\bm{A}+\bm{B})=\mathcal{S}_{\bm\lambda}(\bm{A})+\mathcal{S}_{\bm\lambda}(\bm{B}),
\\
\|\mathcal{S}_{\bm{\lambda}}(\bm{A})\|_F=\|\bm{A}\|_F,
&&
\langle\mathcal{S}_{\bm\lambda}(\bm{A}),\mathcal{S}_{\bm\lambda}(\bm{B})\rangle=\langle\bm{A},\bm{B}\rangle,
\\
\mathcal{S}_{\bm\lambda}(\mathcal{S}_{\bm\mu}(\bm{A}))=\mathcal{S}_{\bm\lambda+\bm\mu}(\bm{A}).
\end{align*}
All properties are simple derivations from the definition.
\end{definition}

In our previous work \cite{Bossmann20} we showed,
that in many applications objects hidden in the data can be approximated by a shifted rank-1 matrix $\mathcal{S}_{\bm{\lambda}}(\bm{uv^*})$. Let us give two examples to clarify this idea. First, consider seismic data that is recorded from several stations at different locations An incoming seismic wave can be described by its oscillation in time $\bm{u}$. This oscillation is measured by all stations with different amplitudes $\bm{v}$ and at different times $\bm{\lambda}$ depending on the distance to the seismic source. Hence, the wave forms approximately a shifted rank-1 matrix. As a second example, consider a video recording where each frame contains a moving object. The object itself can be described by the vector $\bm{u}$, the movement is encoded in the shift vector $\bm{\lambda}$, and the vector $\bm{v}$ gives the visibility or illumination of the object per frame.

The shifted rank-1 model has proven to be a good approximation in several applications. However, it comes with some drawbacks. The vectors $\bm{v}$ and $\bm{\lambda}$ were not regularized and could contain arbitrary jumps or switches in sign. Depending on the application we would actually assume some smoothness on these vectors. Not including this into the shifted rank-1 model can lead to unexpected results and cause problems in presence of high noise. As another problem, the shifted rank-1 model assumes that the object $\bm{u}$ stays constant over all measured data. However, it is more likely that some changes will occur over all measurements. For example, seismic waves change depending on the subsurface material, objects in videos can change their angle or distance to the camera. We want to address these problems with our new approach presented in the following sections.

\section{Model and minimization problem}

In this section, we introduce the new model and derive its minimization problem form that we solve in later sections. Therefore, let $\bm{D}\in\mathbb{R}^{M\times N}$ be the data obtained from $N$ different observations. Throughout this work we assume that the data is real valued. Furthermore, each column of $\bm{D}$ should be obtained from an equidistantly sampled measurement. This ensures that the shift operator $\mathcal{S}_{\bm\lambda}$ does not distort the data because of irregularities in the sampling grid. Last, the distance between two neighboring observations should be constant. Meaning, if the observations were made from position $x_k$, $k=1,\ldots N$, then $|x_k-x_{k+1}|=\text{CONST}$ for $k=1,\ldots,N-1$. Here $x_k$ can be a position in space (e.g., the positions on the surface where seismic or ultrasonic probes were placed) or a position in time (e.g., the time at which a frame of a video was recorded). In our model we assume that any observed object can change form and position from one measurement to the next. The amount of change allowed should depend on the distance between both observations. If this distance stays constant, we can treat each change equally. While the presented methods will also work for more general data $\bm{D}$, these three assumptions massively simplify the notation and analysis.

We first introduce our model idea heuristically. The main concept of our model is, that the data is "object sparse", meaning that it is composed out of only a few essential objects of interest plus noise/minor non-important objects. Exemplary, seismic data will only show a few essential seismic waves, video data may be composed out of a background and a few moving objects, etc.. Second, we assume that the behavior of these objects follows basic physical concepts. Hence, the change of an object between two observations $\bm{D}_{:k}$ and $\bm{D}_{:(k+1)}$ must be somehow structured. For our model, we focus on the change in position and the change in form of the object. The position of the object in the data describes its movement and should be Lipschitz-continuous where the Lipschitz constant is directly related to the maximum speed that object can physically achieve. The form of an object can, depending on the application, change drastically between two observations, e.g., when there is a sudden change in the camera angle for video recordings, or when the subsurface material changes in seismic measurements. However, we assume that these phenomenons are rare and the total change in form over the data is bounded.

Let us now formulate this idea mathematically. Therefore, we use the following definition.
\begin{definition}\label{def:object}
Let $\bm{U}\in\mathbb{R}^{M\times N}$ and $\bm{\lambda}\in\mathbb{N}^N$. We call a matrix of the form $\mathcal{S}_{\bm{\lambda}}(\bm{U})$ an object. We call
\begin{align*}
\sum\limits_{k=1}^{N-1}\|\bm{U}_{:k}-\bm{U}_{:(k+1)}\|_2^2
\end{align*}
the total change of the object. Furthermore, we say that the object moves Lipschitz-continuous with Lipschitz constant $C$, if $|\bm{\lambda}_k-\bm{\lambda}_{k+1}|\leq C$ for all $k=1,\ldots,N-1$.
\end{definition}
Note that the above definition is for equidistantly sampled data. Otherwise, the total change should be a weighted sum and the Lipschitz-continuity needs to take the distance between two observations into account. Furthermore, we can rewrite the norm as
\begin{align*}
\|\bm{U}_{:k}-\bm{U}_{:(k+1)}\|_2^2=\|\bm{U}_{:k}\|_2^2+\|\bm{U}_{:(k+1)}\|_2^2-2\langle\bm{U}_{:k},\bm{U}_{:(k+1)}\rangle.
\end{align*}
The inner product can also be written as $\langle\bm{U}_{:k},\bm{U}_{:(k+1)}\rangle=\cos\alpha\|\bm{U}_{:k}\|_2\|\bm{U}_{:(k+1)}\|_2$ where $\alpha$ is the angle spanned by both vectors. Hence, minimizing the total change of an object will favor small angles $\alpha$, i.e., so-called (highly) correlated columns.

With Definition \ref{def:object} we can now define the direct and inverse problem of our model. Assume the measured data $\bm{D}$ contains $L$ objects. We can write is as
\begin{align}\label{dataObjectLink}
	\bm{D}=\sum\limits_{l=1}^L\mathcal{S}_{\bm{\lambda^l}}(\bm{U^l})+\text{NOISE},
\end{align}
where $\bm{\lambda^l}$ and $\bm{U^l}$ ($l=1,\ldots,L$) are the $L$ vectors and matrices corresponding to the $L$ objects. Again, the objects represent different parts of the measurement. For video data one object might be a moving car, one might be a pedestrian, and another one represents the background. We assume that $\bm{D}$ is object sparse, which means that the number of objects is small compared to the data size, i.e., $L\ll\min(M,N)$. Indeed this kind of sparsity is similar to the concept of low-rank matrices where the number of non-zero singular values is small. Object sparsity does not require the matrix $\bm{U}$ to be sparse itself, just as low-rank also does not require the singular vectors to be sparse.

The direct problem in our model now formulates as follows. Given the objects $\mathcal{S}_{\bm{\lambda^l}}(\bm{U^l})$ can we simulate the data $\bm{D}$ that would be obtained from the measurements. The answer to this is directly given by Equation (\ref{dataObjectLink}) which we just need to evaluate. Much more complicated is the inverse problem: given the data $\bm{D}$ can we recover the objects. This requires a decomposition of the possibly noisy data $\bm{D}$. To simplify the problem a bit, we use an iterative approach and recover one object at a time. Meaning, our method recovers one object $\mathcal{S}_{\bm{\lambda}}(\bm{U})$ from the given data $\bm{D}$. After this, we form the residual $\bm{R}=\bm{D}-\mathcal{S}_{\bm{\lambda}}(\bm{U})$. The proposed method can then be applied again to the residual to recover the next object. This process can be repeated until all objects are recovered. In each iteration we have to find the object that fits best to the current data, i.e., we need to solve

\begin{align*}
\min\limits_{\bm{U},\bm{\lambda}}\|\bm{D}-\mathcal{S}_{\bm{\lambda}}(\bm{U})\|_F^2\ \ \ \text{s.t. }\begin{matrix}
	\text{total change is bounded}\\
	\bm{\lambda}\text{ is Lipschitz-continuous}
	\end{matrix}.
\end{align*}
This minimization problem has two constraints that need to be handled. The Lipschitz-continuity of $\bm{\lambda}$ is directly related to the speed of the object. We therefore assume that a reasonable bound is known from the application. "How many pixels could the object move from one observation to the next?". We can include this restriction as inequality similar to (\ref{min2}). A-priori information about the total change however might be much harder to obtain. Therefore, we include this constraint using a penalty term as in (\ref{min1}). Moreover, using the properties of the shift-operator (see Definition \ref{def:shiftOperator}) we can rewrite the data fidelity term as
\begin{align*}
\|\bm{D}-\mathcal{S}_{\bm{\lambda}}(\bm{U})\|_F^2
&=
\|\mathcal{S}_{\bm{-\lambda}}\left(\bm{D}-\mathcal{S}_{\bm{\lambda}}(\bm{U})\right)\|_F^2\\
&=
\|\mathcal{S}_{-\bm{\lambda}}(\bm{D})-\bm{U}\|_F^2.
\end{align*}
Altogether, we obtain the minimization problem
\begin{align}
\min\limits_{\bm{U},\bm{\lambda}}\|\mathcal{S}_{-\bm{\lambda}}(\bm{D})-\bm{U}\|_F^2+\mu
\sum\limits_{k=1}^{N-1}\|\bm{U}_{:k}-\bm{U}_{:(k+1)}\|_2^2\label{U_lambda_min_prob}\\
\text{s.t. }|\bm{\lambda}_k-\bm{\lambda}_{k+1}|\leq C\text{ for }k=1,\ldots,N-1\notag
\end{align}
where $\mu$ is a regularization parameter. To get an intuition on what influence the paramter $\mu$ has on the optimal solution, let us consider the two extreme cases $\mu=0$ and $\mu\rightarrow\infty$. For $\mu=0$ the restriction on the matrix $\bm{U}$ vanishes completely and the object can change arbitrarily. This leads to the obvious solution $\bm{U}=\bm{D}$ and $\bm{\lambda}=\bm{0}$ which holds for every constant $C\geq0$. In the other case $\mu\rightarrow\infty$ we force each column of $\bm{U}$ to be equal, i.e., $\bm{U}_{:k}=\bm{u}$ for a vector $\bm{u}$ and all $k=1,\ldots,N$. We now have to minimize $\sum_{k=1}^N\|(\mathcal{S}_{-\bm\lambda}(\bm{D}))_{:k}-\bm{u}\|_2^2$ over $\bm{u}$ and $\bm\lambda$. The optimal solution $\bm{u}$ is the mean over all columns $(\mathcal{S}_{-\bm\lambda}(\bm{D}))_{:k}$ while the optimal shift vector $\bm\lambda$ (still depending on $C$) should minimize the variance over all columns. Note that in this case the optimal solution $\bm{U}=\bm{u}\bm{1}^T$ is a rank-1 matrix. Indeed, we would expect $\bm{U}$ to be approximately of low rank for large parameters $\mu$, i.e., $\bm{U}$ should have only a few large singular values while the rest is small. However, we do not investigate on this conjecture here. If it always holds true and if it can be used to further improve the algorithm remains subject to future work.

Let us now discuss how to solve minimization problem (\ref{U_lambda_min_prob}). Note that the problem is quadratic in $\bm{U}$. Hence, if we would know the shift vector $\bm{\lambda}$, the object matrix $\bm{U}$ would be easy to obtain. In the next paragraphs, we assume $\bm{\lambda}$ to be given and solve the quadratic minimization analytically. We obtain a minimum depending on $\bm{\lambda}$ and can then solve for $\bm{\lambda}$ by minimizing this minimum.

First, we note that the penalty term can be expressed in terms of the rows of the matrix $\bm{U}$ and the matrix $\bm{T}$ by
\begin{align*}
&\sum\limits_{k=1}^{N-1}\|\bm{U}_{:k}-\bm{U}_{:(k+1)}\|_2^2\\
=&\sum\limits_{k=1}^{N-1}\sum\limits_{j=1}^{M}(\bm{U}_{jk}-\bm{U}_{j(k+1)})^2\\
=&\sum\limits_{j=1}^M\sum\limits_{k=1}^{N-1} \bm{U}_{jk}^2-2\bm{U}_{jk}\bm{U}_{j(k+1)}+\bm{U}_{j(k+1)}^2\\
=&\sum\limits_{j=1}^M\bm{U}_{j:}\bm{T}\bm{U}_{j:}^T.
\end{align*}
Next, we divide the data fidelity term into its quadratic, linear, and constant term:
\begin{align*}
&\|\mathcal{S}_{-\bm{\lambda}}(\bm{D})-\bm{U}\|_F^2\\
=&\|\bm{U}\|_F^2-2\langle\mathcal{S}_{-\bm{\lambda}}(\bm{D}),\bm{U}\rangle+\|\mathcal{S}_{-\bm{\lambda}}(\bm{D})\|_F^2\\
=&\|\mathcal{S}_{-\bm{\lambda}}(\bm{D})\|_F^2+\sum\limits_{j=1}^M\bm{U}_{j:}\bm{U}_{j:}^T-2\langle(\mathcal{S}_{\bm\lambda}(\bm{D}))_{j:},\bm{U}_{j:}\rangle.
\end{align*}
The constant term $\|\mathcal{S}_{-\bm{\lambda}}(\bm{D})\|_F^2$ can be ignored for the minimization. Adding both penalty term and data fidelity term back together, we obtain the minimization problem
\begin{align}\label{U_min_prob}
\min\limits_{\bm{U}}\sum\limits_{j=1}^M
\bm{U}_{j:}(\bm{I}+\mu\bm{T})\bm{U}_{j:}^T-2\langle\left(\mathcal{S}_{-\bm{\lambda}}(\bm{D})\right)_{j:},\bm{U}_{j:}\rangle.
\end{align}
Note that these are actually $M$ independent minimization problems for $\bm{U}_{j:}$ with $j=1,\ldots,M$. Each summand is quadratic using the same system matrix $\bm{I}+\mu\bm{T}$. Since $\bm{T}$ is (not strict) diagonal dominant, $\bm{I}+\mu\bm{T}$ is invertible and positive definite independent of the choice of $\mu\geq0$. A quadratic form with a positive definite system matrix is convex and thus its minimum can be found setting the derivative to $\bm{0}$. We obtain the optimal point
\begin{align*}
&&\bm{0}&=2(\bm{I}+\mu\bm{T})\bm{U}_{j:}^T-2\left(\mathcal{S}_{-\bm{\lambda}}(\bm{D})\right)_{j:}^T\\
\Leftrightarrow &&
\bm{U}_{j:}^T&=(\bm{I}+\mu\bm{T})^{-1}\left(\mathcal{S}_{-\bm{\lambda}}(\bm{D})\right)_{j:}^T.
\end{align*}
Plugging the optimal point back into the quadratic form yields an optimal value of
\begin{align*}
&-\left(\mathcal{S}_{-\bm{\lambda}}(\bm{D})\right)_{j:}(\bm{I}+\mu\bm{T})^{-1}\left(\mathcal{S}_{-\bm{\lambda}}(\bm{D})\right)_{j:}^T\\
=&
-\langle
(\bm{I}+\mu\bm{T})^{-1},
\left(\mathcal{S}_{-\bm{\lambda}}(\bm{D})\right)_{j:}^T
\left(\mathcal{S}_{-\bm{\lambda}}(\bm{D})\right)_{j:}
\rangle_F
\end{align*}
for each summand. Adding all terms for $j=1,\ldots,M$ together, we obtain the minimal value of (\ref{U_min_prob})
\begin{align}\label{lambda_min_val}
-\langle
(\bm{I}+\mu\bm{T})^{-1},
\left(\mathcal{S}_{-\bm{\lambda}}(\bm{D})\right)^T
\left(\mathcal{S}_{-\bm{\lambda}}(\bm{D})\right)
\rangle_F.
\end{align}
Extracting the object out of given data can now be done in two steps:
\begin{itemize}
	\item Minimize (\ref{lambda_min_val}) over all Lipschitz-continuous $\bm{\lambda}$.
	\item Solve the quadratic minimization (\ref{U_min_prob}).
\end{itemize}
For the second step, any known quadratic optimization method can be used. The more interesting step is obviously finding $\bm{\lambda}$. Since $\bm{\lambda}\in\mathbb{N}^N$, this is a combinatorial problem. (Note that we use a periodical shift and thus w.l.o.g. $0\leq\bm{\lambda}_k\leq M-1$ for all $k=1,\ldots,N$.) Indeed, the problem is closely related to some maximum clique problems on graphs which are NP-hard, indicating that this problem itself might be NP-hard. In the next section we present an approximation algorithm to recover $\bm{\lambda}$ with exponentially decaying approximation error. Although the algorithm has exponential complexity, we show that the performance can still be handled whenever the object is sufficiently slow or the sampling points are sufficiently close to another (i.e., whenever $C$ is small).

\section{ORKA: algorithm and theory}
In this section we develop an algorithm to maximize the negative of (\ref{lambda_min_val}), which will remove the minus sign at the beginning of the term. We rewrite the term using the explicit form of the Frobenius inner product and obtain
\begin{align}
&\langle
(\bm{I}+\mu\bm{T})^{-1},
\left(\mathcal{S}_{-\bm{\lambda}}(\bm{D})\right)^T
\left(\mathcal{S}_{-\bm{\lambda}}(\bm{D})\right)
\rangle_F\notag\\
=&\sum\limits_{j,k=1}^{N}
\left((\bm{I}+\mu\bm{T})^{-1}\right)_{jk}
\langle\left(\mathcal{S}_{-\bm{\lambda}}(\bm{D})\right)_{:j},\left(\mathcal{S}_{-\bm{\lambda}}(\bm{D})\right)_{:k}\rangle\notag\\
=&\sum\limits_{j,k=1}^{N}
\left((\bm{I}+\mu\bm{T})^{-1}\right)_{jk}
\langle\mathcal{S}_{-\bm{\lambda}_j}(\bm{D}_{:j}),\mathcal{S}_{-\bm{\lambda}_k}(\bm{D}_{:k})\rangle\label{lambda_graph_absolute}
\end{align}
Hence, we need to maximize a weighted sum of inner products where the weights are given by the inverse of the system matrix.

Before we go on and transform the problem further, we want to give a quite simple graph formulation of the above term. This formulation will not be of use later on but is merely for the readers intuition of the problem. We construct the graph in the following way:
\begin{itemize}
	\item The graph has $MN$ vertices, where a vertex is labeled $(j,k)$ with $j=0,\ldots,M-1$, $k=1,\ldots,N$. The vertex $(j,k)$ represents the $k$-th observation shifted by $j$, i.e., it represents the vector $\mathcal{S}_{j}(\bm{D}_{:k})$.
	\item Two vertices $(j,k)$ and $(j',k')$ are connected via an edge if and only if $k\neq k'$. The edges weight is $\left((\bm{I}+\mu\bm{T})^{-1}\right)_{kk'}
	\langle\mathcal{S}_{j}(\bm{D}_{:k}),\mathcal{S}_{j'}(\bm{D}_{:k'})\rangle$.
\end{itemize}
This construction creates a complete $N$-partite graph. Each choice of $\bm{\lambda}$ correlates to a clique in the graph (using the vertices $\{(\bm{\lambda}_k,k)\}_{k=1}^N$. Hence, maximizing (\ref{lambda_graph_absolute}) means solving the generalized maximum clique problem which is NP-hard in general.

Instead, we use a more sophisticated graph construction to find an approximation algorithm for the problem. We make use of the Lipschitz continuity of $\bm{\lambda}$ to reduce the problem size. Therefore, we apply the shift $\mathcal{S}_{\bm{\lambda}_j}$ to both arguments of the inner product in (\ref{lambda_graph_absolute}). By Definition \ref{def:shiftOperator} this does not change its value, the shifts on the left argument cancel out, and the shifts on the right argument can be combined. We obtain
\begin{align}\label{lambda_graph_relative}
\sum\limits_{j,k=1}^{N}
\left((\bm{I}+\mu\bm{T})^{-1}\right)_{jk}
\langle\bm{D}_{:j},\mathcal{S}_{\bm{\lambda}_j-\bm{\lambda}_k}(\bm{D}_{:k})\rangle.
\end{align}
Note that $-C|j-k|\leq \bm{\lambda}_j-\bm{\lambda}_k \leq C|j-k|$. Hence, for each summand we have $2C|j-k|+1$ different possibilities. (Since $\bm{\lambda}\in\mathbb{N}^N$ we can choose $C$ such that $C\in\mathbb{N}$.) Restricting the sum (\ref{lambda_graph_relative}) only to pairs $j,k$ with $|j-k|\leq K$ for $K\ll N$ would reduce the number of possible values drastically. Furthermore, as we will see later on, the matrix $(\bm{I}+\mu\bm{T})^{-1}$ decreases exponentially away from the main diagonal. We define the $K$-approximation as
\begin{align}\label{K_approximation}
	\tau_K(\bm{\lambda})=
	\sum\limits_{\substack{j,k=1\\|j-k|\leq K}}^{N}
	\left((\bm{I}+\mu\bm{T})^{-1}\right)_{jk}
	\langle\bm{D}_{:j},\mathcal{S}_{\bm{\lambda}_j-\bm{\lambda}_k}(\bm{D}_{:k})\rangle.
\end{align}
Note that $\tau_{N-1}(\bm{\lambda})$ is equivalent to (\ref{lambda_graph_relative}). To get bounds on the approximation error, we first need to find an analytic formula for the coefficients of the inverse system matrix. Luckily, the matrix at hand is a near Teoplitz tridiagonal matrix, which have been studied extensively in the past. Formulas for the inverse coefficients can be found e.g., in \cite{Wittenburg98}. For our analysis, we use the form given in \cite{Yueh06}.
\begin{lemma}\label{cor_analyticInverse}
Let $\mu>0$. Then $\bm{I}+\mu\bm{T}$ is invertible and
\begin{align*}
&\left((\bm{I}+\mu\bm{T})^{-1}\right)_{jk}\\
=&\frac{\cosh((N+1-j-k)\phi)+\cosh((N-|j-k|)\phi)}{2\mu\sinh\phi\sinh(N\phi)}\\
\geq&0
\end{align*}
with $\phi=\arccosh(1+\frac{1}{2\mu})\in\mathbb{R}_+$.
\end{lemma}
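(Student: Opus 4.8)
The plan is to exhibit the claimed matrix explicitly, call it $\bm{G}$, and verify by direct computation that $(\bm{I}+\mu\bm{T})\bm{G}=\bm{I}$; the invertibility itself is already available, since the main text observes that $\bm{I}+\mu\bm{T}$ is positive definite (hence nonsingular) for every $\mu\ge 0$ by diagonal dominance, and $\phi=\arccosh(1+\frac{1}{2\mu})$ is a well-defined positive real because $1+\frac{1}{2\mu}>1$.

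The first step is to bring the claimed entries into a separable (semiseparable) form. Applying the identity $\cosh A+\cosh B=2\cosh\frac{A+B}{2}\cosh\frac{A-B}{2}$ to the numerator, one finds for $j\le k$ that
\[
\bm{G}_{jk}=\frac{\cosh\!\big((j-\tfrac12)\phi\big)\,\cosh\!\big((N+\tfrac12-k)\phi\big)}{\mu\sinh\phi\,\sinh(N\phi)},
\]
and by the evident symmetry of the original expression in $j,k$ this reads $\bm{G}_{jk}=u_{\min(j,k)}\,w_{\max(j,k)}/\theta$ with $u_i=\cosh((i-\tfrac12)\phi)$, $w_i=\cosh((N+\tfrac12-i)\phi)$ and $\theta=\mu\sinh\phi\,\sinh(N\phi)$. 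Nonnegativity is then immediate, since $\phi>0$ makes every hyperbolic factor positive.

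The key structural fact is that both $u_i$ and $w_i$ solve the homogeneous three-term recurrence $-\mu x_{i-1}+(1+2\mu)x_i-\mu x_{i+1}=0$ attached to the interior rows of $\bm{I}+\mu\bm{T}$: its characteristic roots are $e^{\pm\phi}$ because $e^{\phi}+e^{-\phi}=2\cosh\phi=2+\tfrac1\mu$, and any $\cosh((i-c)\phi)$ is a combination of $e^{\pm i\phi}$. Using $\bm{G}_{jk}=u_{\min}w_{\max}/\theta$, I would compute $(\bm{I}+\mu\bm{T})\bm{G}$ row by row. For an interior row $i$ and a column $k$ with $i\neq k$ the three relevant entries share the same factor $w_k$ (if $i<k$) or $u_k$ (if $i>k$), so the combination collapses to the homogeneous recurrence applied to $u$ or $w$ and vanishes. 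The boundary rows $i=1,N$ are where the half-integer offset pays off: the corner value $1+\mu$ forces the condition $(1+\mu)u_1-\mu u_2=0$, which holds precisely because $2\mu\sinh^2(\tfrac\phi2)=\mu(\cosh\phi-1)=\tfrac12$ — this is exactly why the arguments carry the shift $-\tfrac12$ rather than being integers.

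The remaining and most delicate computation is the diagonal. For an interior $k$ the recurrence converts $(\bm{I}+\mu\bm{T})\bm{G}$ at $(k,k)$ into $\tfrac{\mu}{\theta}\big(u_{k+1}w_k-u_k w_{k+1}\big)$, a Casoratian of the two solutions; expanding the products with $\cosh A\cosh B=\tfrac12[\cosh(A+B)+\cosh(A-B)]$ gives $u_{k+1}w_k-u_k w_{k+1}=\tfrac12[\cosh((N+1)\phi)-\cosh((N-1)\phi)]=\sinh\phi\,\sinh(N\phi)=\theta/\mu$, so the diagonal equals $1$. The corner diagonals $(1,1)$ and $(N,N)$ need a separate but analogous evaluation, again using the addition formulas together with $\sinh\phi\,\tanh(\tfrac\phi2)=\tfrac1{2\mu}$. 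I expect this Casoratian and corner bookkeeping to be the main obstacle: the interior cancellations are routine once the recurrence is identified, but making the normalization $\theta$ and the two perturbed corners produce exactly $1$ (and not merely a constant) is where the specific value of $\phi$ and the half-integer shifts must be tracked carefully. An alternative, following \cite{Yueh06}, is to quote the general inversion formula for tridiagonal matrices with perturbed corner entries and simply specialize it to $\bm{I}+\mu\bm{T}$.
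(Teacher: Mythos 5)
Your proposal is correct, and it takes a genuinely different route from the paper's own proof. The paper does what you only mention as a fallback at the end: it quotes the closed-form inversion formula for near-Toeplitz tridiagonal matrices from \cite{Yueh06} (Theorem 2), which expresses the entries as $\frac{\cos((N+1-j-k)\psi)+\cos((N-|j-k|)\psi)}{-2\mu\sin\psi\sin(N\psi)}$ with $\cos\psi=1+\frac{1}{2\mu}$, then observes that this forces an imaginary angle, substitutes $\psi=-i\phi$ with $\phi=\arccosh(1+\frac{1}{2\mu})$, and converts trigonometric into hyperbolic functions. Your argument is instead a self-contained direct verification: the product-to-sum identity puts the claimed entries into the semiseparable form $u_{\min(j,k)}w_{\max(j,k)}/\theta$; the vectors $u,w$ solve the interior three-term recurrence with characteristic roots $e^{\pm\phi}$ (since $2\cosh\phi=2+\frac{1}{\mu}$); the corner rows close because $\mu(\cosh\phi-1)=\frac{1}{2}$; and the diagonal reduces to the Casoratian $u_{k+1}w_k-u_kw_{k+1}$. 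I checked the steps you flagged as delicate, and they go through exactly as you predicted: the Casoratian equals $\frac{1}{2}\left[\cosh((N+1)\phi)-\cosh((N-1)\phi)\right]=\sinh\phi\sinh(N\phi)=\theta/\mu$, and the $(1,1)$ and $(N,N)$ entries reduce to the identity $\mu\sinh\phi\tanh(\phi/2)=2\mu\sinh^2(\phi/2)=\frac{1}{2}$, so every diagonal entry is exactly $1$. Comparing the two: the paper's proof is only a few lines but rests on the correctness of the cited formula and on the tacit justification that it remains valid under the complex substitution (analytic continuation of a rational identity in $\cos\psi$); yours is longer but elementary and self-verifying, makes the nonnegativity claim transparent (a product of positive hyperbolic factors over a positive denominator, in fact strict positivity), and explains structurally why the half-integer offsets $j-\frac{1}{2}$, $N+\frac{1}{2}-k$ appear, namely as the boundary conditions encoding the perturbed corner entries $1+\mu$. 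A small bonus of your route: exhibiting an explicit inverse by verification proves invertibility outright, so the appeal to diagonal dominance becomes redundant rather than a needed ingredient.
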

\begin{proof}
From \cite{Yueh06} (Theorem 2) we get the representation
\begin{align}
&\left((\bm{I}+\mu\bm{T})^{-1}\right)_{jk}\notag\\
=&\frac{\cos((N+1-j-k)\psi)+\cos((N-|j-k|)\psi)}{-2\mu\sin\psi\sin(N\psi)}\label{Yueh_inverse}
\end{align}
with $\cos\psi=1+\frac{1}{2\mu}$. Since $1+\frac{1}{2\mu}>1$, we get a complex argument $\psi$. We use the identity $\cos\psi=\cosh(i\psi)$ and get $\psi=-i\arccosh(1+\frac{1}{2\mu})$. We define $\phi=\arccosh(1+\frac{1}{2\mu})$ and substitute $\psi=-i\phi$. Now using the identities $\cos(-i\phi)=\cosh(\phi)$ and $\sin(-i\phi)=-i\sinh(\phi)$ we can replace all trigonometric functions with hyperbolic functions in (\ref{Yueh_inverse}). The minus in the denominator cancels with $(-i)^2$ and we obtain the result.
\end{proof}
We can already see that the inverse of the system matrix decreases rapidly away from the main diagonal and towards the main anti-diagonal.

The representation of the inverse matrix given in Lemma \ref{cor_analyticInverse} is useful for our theoretical analysis. However, when it comes to the numerical implementation of the inverse matrix (which we need for the ORKA algorithm), we do not suggest to use this formula. Both, numerator and denominator can become very large for large $N$ because $\cosh$ and $\sinh$ grow exponentially. The entries of the inverse matrix on the contrary are usually quite small. This contrast can lead to a loss in precision when using the formula. A numerical better representation is given in the following Lemma.
\begin{lemma}\label{coeff_form_sum}
The eigenvalues $\lambda_l$, $l=0,\ldots,N-1$ of $\bm{T}$ are
\begin{align*}
\lambda_l=4\sin\left(\frac{l\pi}{2N}\right)^2.
\end{align*}
The according eigenvectors $\bm{x}^l$ are
\begin{align*}
\bm{x}^l_j=\begin{cases}
\sqrt{\frac{1}{N}} & l=0 \\
\sqrt{\frac{2}{N}}\cos\left(\frac{l(j-1/2)\pi}{N}\right) & l\neq0
\end{cases}.
\end{align*}
Hence, the inverse of $\bm{I}+\mu\bm{T}$ can be written as
\begin{align*}
&\left((\bm{I}+\mu\bm{T})^{-1}\right)_{jk}\\
=&\frac{1}{N}+\frac{2}{N}\sum\limits_{l=1}^{N-1}\frac{\cos\left(\frac{l(j-1/2)\pi}{N}\right)\cos\left(\frac{l(k-1/2)\pi}{N}\right)}{1+4\mu\sin\left(\frac{l\pi}{2N}\right)^2}\\
=&\frac{1}{N}+\frac{1}{N}\sum\limits_{l=1}^{N-1}\frac{\cos\left(\frac{l(j-k)\pi}{N}\right)+\cos\left(\frac{l(j+k-1)\pi}{N}\right)}{1+4\mu\sin\left(\frac{l\pi}{2N}\right)^2}\\
\end{align*}
\end{lemma}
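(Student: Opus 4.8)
The plan is to prove the claim in three stages: first verify that the proposed pairs $(\lambda_l,\bm{x}^l)$ are genuine eigenpairs of $\bm{T}$ by direct substitution, then argue that they form an orthonormal basis, and finally read off the entries of $(\bm{I}+\mu\bm{T})^{-1}$ from the resulting spectral decomposition.

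For the first stage I would abbreviate $c_j=\cos\bigl(l(j-1/2)\pi/N\bigr)$ and compute $(\bm{T}\bm{x}^l)_j$ row by row. On the interior rows $j=2,\ldots,N-1$ the matrix acts as $-c_{j-1}+2c_j-c_{j+1}$, and the product-to-sum identity $c_{j-1}+c_{j+1}=2c_j\cos(l\pi/N)$ collapses this to $2\bigl(1-\cos(l\pi/N)\bigr)c_j=4\sin^2\bigl(l\pi/(2N)\bigr)c_j=\lambda_l c_j$, as required. The only delicate point is the two boundary rows, where $\bm{T}$ has diagonal entry $1$ instead of $2$ and only a single off-diagonal neighbor. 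Here the half-integer shift in the argument of the cosine is exactly what saves us: extending the formula to the ghost indices $j=0$ and $j=N+1$ gives $c_0=\cos\bigl(-l\pi/(2N)\bigr)=c_1$ and, using $\cos(l\pi\pm x)=(-1)^l\cos x$, also $c_{N+1}=c_N$. Thus the reduced first row $c_1-c_2$ coincides with the interior expression $-c_0+2c_1-c_2$, and similarly at the bottom, so the eigenvalue relation holds on the boundary rows as well. The case $l=0$ is immediate, since $\bm{x}^0$ is constant and every row of $\bm{T}$ sums to zero, giving $\lambda_0=0$.

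For the second stage, the eigenvalues $\lambda_l=4\sin^2\bigl(l\pi/(2N)\bigr)$ are strictly increasing in $l$ over $l=0,\ldots,N-1$ (the argument lies in $[0,\pi/2)$), hence distinct, so eigenvectors of the symmetric matrix $\bm{T}$ are automatically orthogonal. It remains to confirm the normalization, which follows from the standard discrete-cosine orthogonality relation asserting that $\sum_{j=1}^N c_j^{(l)}c_j^{(m)}$ equals $N$, $N/2$, or $0$ according as $l=m=0$, $l=m\neq0$, or $l\neq m$; this is precisely what fixes the constants $\sqrt{1/N}$ and $\sqrt{2/N}$. I would either cite this relation or derive it quickly by writing the cosines via complex exponentials and summing the resulting geometric series.

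With $\bm{X}=(\bm{x}^0\ \cdots\ \bm{x}^{N-1})$ orthogonal and $\bm{\Lambda}=\mathrm{diag}(\lambda_l)$, the final stage is purely algebraic: $\bm{I}+\mu\bm{T}=\bm{X}(\bm{I}+\mu\bm{\Lambda})\bm{X}^T$ inverts to $\sum_{l=0}^{N-1}(1+\mu\lambda_l)^{-1}\bm{x}^l(\bm{x}^l)^T$, so its $(j,k)$ entry is $\sum_l \bm{x}^l_j\bm{x}^l_k/(1+\mu\lambda_l)$. Separating the $l=0$ term (which contributes $1/N$) from the rest yields the first displayed formula, and applying the product-to-sum identity $2\cos A\cos B=\cos(A-B)+\cos(A+B)$ with $A=l(j-1/2)\pi/N$ and $B=l(k-1/2)\pi/N$ turns it into the second. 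I expect the main obstacle to lie entirely in the first stage, namely getting the two boundary rows to cooperate; this hinges on recognizing $\bm{T}$ as a Neumann Laplacian and exploiting the half-integer shift so that the discrete reflecting conditions $c_0=c_1$ and $c_{N+1}=c_N$ hold for free.
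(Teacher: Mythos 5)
Your proposal is correct and follows essentially the same route as the paper: eigendecomposition of $\bm{T}$, inversion via reciprocal eigenvalues, and the cosine product-to-sum identity. The only difference is that the paper outsources the eigendecomposition to a citation, whereas you verify the eigenpairs directly — and your treatment of the Neumann boundary rows via the ghost-point identities $c_0=c_1$ and $c_{N+1}=c_N$ is exactly right, making your argument self-contained.
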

\begin{proof}
The eigendecomposition of $\bm{T}$ can be found e.g., in \cite{Losonczi92}. The representation of the inverse matrix is obtained by using the eigendecomposition with reciprocal eigenvalues. Last, we use product-to-sum identity for cosine.
\end{proof}
Note that the last equation gives a form of the inverse in terms of its diagonal and anti-diagonal parts similar to Lemma \ref{cor_analyticInverse}. We can now prove some approximation results for our $K$-approximation.
\begin{corollary}\label{cor_approx}
Let $\|\bm{D}_{:k}\|_2\leq1$ for all $k=1,\ldots,N$. Then for all $\bm{\lambda}\in\mathbb{N}^N$ and $K<N-1$ we have
\begin{align}\label{upper_bound}
|\tau_{N-1}(\bm{\lambda})-\tau_K(\bm{\lambda})|\leq G(\mu,N)(N-K)^2e^{(N-K)\phi}
\end{align}
with $\phi$ as in Lemma \ref{cor_analyticInverse} and
\begin{align*}
0\leq G(\mu,N)=\frac{1}{\mu\sinh\phi\sinh(N\phi)}\leq\frac{1}{N},
\end{align*}
\begin{align*}
\lim\limits_{\mu\rightarrow0}G(\mu,N)=0,
&&
\lim\limits_{\mu\rightarrow\infty}G(\mu,N)=\frac{1}{N}.
\end{align*}
Furthermore, there exist matrices such that
\begin{align}\label{lower_bound}
	|\tau_{N-1}(\bm{\lambda})-\tau_K(\bm{\lambda})|\geq \frac{1}{8}G(\mu,N)(N-K)^2e^{(N-K)\phi/2}.
\end{align}
\end{corollary}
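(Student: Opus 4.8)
The plan is to treat both bounds as estimates on the tail sum of the inverse matrix, since by the definition of $\tau_K$ we have
\[
\tau_{N-1}(\bm{\lambda})-\tau_K(\bm{\lambda})=\sum_{\substack{j,k=1\\|j-k|>K}}^N\left((\bm{I}+\mu\bm{T})^{-1}\right)_{jk}\langle\bm{D}_{:j},\mathcal{S}_{\bm{\lambda}_j-\bm{\lambda}_k}(\bm{D}_{:k})\rangle.
\]
First I would bound each inner product: by Cauchy--Schwarz together with the norm preservation of $\mathcal{S}_{\bm\lambda}$ (Definition \ref{def:shiftOperator}) and the hypothesis $\|\bm{D}_{:k}\|_2\le1$, every factor satisfies $|\langle\bm{D}_{:j},\mathcal{S}_{\bm{\lambda}_j-\bm{\lambda}_k}(\bm{D}_{:k})\rangle|\le1$. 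Since Lemma \ref{cor_analyticInverse} guarantees the coefficients are non-negative, the triangle inequality collapses the difference to $|\tau_{N-1}(\bm{\lambda})-\tau_K(\bm{\lambda})|\le\sum_{|j-k|>K}((\bm{I}+\mu\bm{T})^{-1})_{jk}$, an estimate independent of $\bm\lambda$ as the statement requires.

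For the upper bound (\ref{upper_bound}) I would insert the closed form from Lemma \ref{cor_analyticInverse}. Writing $d=|j-k|$ and $G=G(\mu,N)$, each entry equals $\tfrac{G}{2}[\cosh((N+1-j-k)\phi)+\cosh((N-d)\phi)]$. The key elementary observation is that on the diagonal $|j-k|=d$ one always has $|N+1-j-k|\le N-1-d$, so the evenness and monotonicity of $\cosh$ give the clean bound $((\bm{I}+\mu\bm{T})^{-1})_{jk}\le G\cosh((N-d)\phi)\le G\,e^{(N-d)\phi}$. Because $d\ge K+1$, this is largest at $d=K+1$, bounding every retained entry by $G\,e^{(N-K)\phi}$, and the number of index pairs with $|j-k|>K$ is $\sum_{d=K+1}^{N-1}2(N-d)=(N-K-1)(N-K)\le(N-K)^2$. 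Multiplying these two estimates yields (\ref{upper_bound}) exactly, with no spurious constant.

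The stated properties of $G$ I would read off from the substitution $\mu=\tfrac{1}{2(\cosh\phi-1)}$, which through the half-angle identities simplifies $\mu\sinh\phi=\tfrac12\coth(\phi/2)$, so that $G=\left(\tfrac12\coth(\phi/2)\sinh(N\phi)\right)^{-1}\ge0$. The bound $G\le\tfrac1N$ then follows from the two elementary inequalities $\coth(\phi/2)\ge2/\phi$ and $\sinh(N\phi)\ge N\phi$, whose product gives $\mu\sinh\phi\sinh(N\phi)\ge N$. The two limits follow by sending $\phi\to\infty$ (where $\coth(\phi/2)\to1$ and $\sinh(N\phi)\to\infty$, forcing $G\to0$) and $\phi\to0$ (where $\coth(\phi/2)\sinh(N\phi)\to2N$, giving $G\to\tfrac1N$).

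The hard part will be the matching lower bound (\ref{lower_bound}), i.e.\ exhibiting near-extremal data. Here I would take every column of $\bm{D}$ equal to a fixed unit vector and $\bm\lambda=\bm0$, so that all inner products equal $1$ and the difference is \emph{exactly} the tail sum. Discarding the non-negative anti-diagonal cosine and collecting diagonals leaves $|\tau_{N-1}(\bm{\lambda})-\tau_K(\bm{\lambda})|\ge G\sum_{m=1}^{N-K-1}m\cosh(m\phi)$. Estimating this sum by its single largest term would recover the full exponent $e^{(N-K)\phi}$ but only a linear prefactor, whereas I need a quadratic one. The balancing idea is to retain only the diagonals with $m\ge(N-K)/2$: on this range each $m\ge(N-K)/2$, the count of retained diagonals is $\lfloor(N-K)/2\rfloor$, and $\cosh(m\phi)\ge\tfrac12e^{m\phi}\ge\tfrac12e^{(N-K)\phi/2}$. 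Multiplying these three factors produces the constant $\tfrac18$ and the balanced rate $e^{(N-K)\phi/2}$ of (\ref{lower_bound}). The obstacle is conceptual rather than computational: one must recognize that neither the largest single entry nor the crude total term-count is tight, and that the correct extremal scaling comes precisely from the half-cutoff, which trades a factor $e^{(N-K)\phi/2}$ of exponential growth for a second factor of $N-K$ in the prefactor; the remaining integer-part corrections are comfortably absorbed by the generous constant $\tfrac18$.
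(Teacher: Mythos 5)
Your opening reduction is identical to the paper's: non-negativity of the inverse entries (Lemma \ref{cor_analyticInverse}) plus Cauchy--Schwarz and shift invariance reduce both (\ref{upper_bound}) and (\ref{lower_bound}) to two-sided estimates of the tail sum $\sum_{|j-k|>K}\left((\bm{I}+\mu\bm{T})^{-1}\right)_{jk}$. After that your three arguments genuinely diverge from the paper, and two of them are improvements. For (\ref{upper_bound}) the paper bounds the tail by the number of terms times its single largest entry and fixes constants with $\cosh x\le 2e^x$; your per-entry estimate, resting on the (correct) observation that $|N+1-j-k|\le N-1-|j-k|$ on each diagonal, so that every tail entry is at most $G\cosh((N-|j-k|)\phi)\le Ge^{(N-K)\phi}$, is sharper and produces the stated constant at once. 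For the properties of $G(\mu,N)$ the paper runs a monotonicity-via-derivative computation and reads the limits off the matrix representations of Lemma \ref{coeff_form_sum}; your identity $\mu\sinh\phi=\tfrac12\coth(\phi/2)$ combined with $\coth(\phi/2)\ge 2/\phi$ and $\sinh(N\phi)\ge N\phi$ settles $0\le G\le 1/N$ and both limits in a few lines, which is cleaner (and your extremal example, unit columns with $\bm\lambda=\bm 0$, actually satisfies the hypothesis $\|\bm{D}_{:k}\|_2\le1$, unlike the paper's $\bm{D}_{jk}=1$). For (\ref{lower_bound}) the paper invokes convexity of $\cosh$ (Jensen) while you cut the diagonal sum at half its range; your idea is arguably the more solid route to the rate $e^{(N-K)\phi/2}$, since a carefully bookkept Jensen argument (the paper divides the argument sum by the number of index pairs although each pair carries two $\cosh$ evaluations, and for convex $f$ one cannot trade $2f(\bar x)$ for $f(2\bar x)$) only certifies a rate of order $e^{(N-K)\phi/4}$.

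The one genuine gap is the constant in (\ref{lower_bound}) when $L:=N-K$ is odd. Your three factors are the count $\lfloor L/2\rfloor$ of retained diagonals, the weight $m\ge L/2$, and $\cosh(m\phi)\ge\tfrac12 e^{L\phi/2}$, giving $G\lfloor L/2\rfloor\cdot\tfrac{L}{2}\cdot\tfrac12 e^{L\phi/2}$. For even $L$ this equals $\tfrac18 GL^2e^{L\phi/2}$ with \emph{zero} slack, so the constant $\tfrac18$ is not ``generous'' and cannot absorb anything; for odd $L$ you only obtain $\tfrac{L(L-1)}{8}Ge^{L\phi/2}$, short of the claim by the factor $(L-1)/L$ (already $2/3$ at $L=3$). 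The repair is easy but must be stated. For odd $L$ the retained diagonals satisfy $m\ge(L+1)/2$, so each term is at least $\tfrac{L+1}{2}\cdot\tfrac12 e^{(L+1)\phi/2}$ and your bound improves to $\tfrac{L^2-1}{8}\,e^{\phi/2}\,Ge^{L\phi/2}$, which beats $\tfrac18 GL^2e^{L\phi/2}$ as soon as $e^{\phi/2}\ge L^2/(L^2-1)$. In the complementary small-$\phi$ regime one has $e^{L\phi/2}\le e^{L/(L^2-1)}\le e^{3/8}$, so the shortfall $\tfrac{L}{8}Ge^{L\phi/2}$ is below $\tfrac15 GL$, and it is covered by the anti-diagonal terms you discarded: there are $L(L-1)\ge 2L$ of them, each worth at least $G/2$. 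With that case distinction added, your proof is complete and, in my view, tighter than the paper's own.
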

\begin{proof}
By Lemma \ref{cor_analyticInverse} all entries of the inverse matrix are positive. We can use Cauchy-Schwarz inequality on the inner product and obtain
\begin{align*}
&|\tau_{N-1}(\bm{\lambda})-\tau_K(\bm{\lambda})|\\
=&
\left|\sum\limits_{\substack{j,k=1\\|j-k|> K}}^{N}
\left((\bm{I}+\mu\bm{T})^{-1}\right)_{jk}
\langle\bm{D}_{:j},\mathcal{S}_{\bm{\lambda}_j-\bm{\lambda}_k}(\bm{D}_{:k})\rangle\right|\\
\leq&\sum\limits_{\substack{j,k=1\\|j-k|> K}}^{N}
\left((\bm{I}+\mu\bm{T})^{-1}\right)_{jk}
\end{align*}
This inequality is sharp for e.g., $\bm{D}_{jk}=1$. Hence, the (\ref{upper_bound}) and (\ref{lower_bound}) are true if we can bound the sum of inverse matrix coefficients from above and below. We use the coefficient representation of Lemma \ref{cor_analyticInverse}. Ignoring the index independent denominator for now, we give bounds for
\begin{align}\label{coeff_sum}
\sum\limits_{\substack{j,k=1\\|j-k|> K}}^{N}\cosh((N+1-j-k)\phi)+\cosh((N-|j-k|)\phi)
\end{align}
The sum has $(N-K)(N-K-1)/2$ terms. The largest summand is obtained at $j=1$, $k=K+2$ (or $k=1$, $j=K+2$). Thus, (\ref{coeff_sum}) is bounded from above by
\begin{align}
&\frac{1}{2}(N-K)(N-K-1)\notag\\
&\cdot\left(\cosh((N-K-2)\phi)+\cosh((N-K-1)\phi)\right)\notag\\
\leq&(N-K)^2\cosh((N-K)\phi).\label{cosh_inequality}
\end{align}
Now we use that $\cosh(x)\leq2e^x$ for $x\geq0$ and obtain (\ref{upper_bound}) with $G=1/(\mu\sinh\mu\sinh(N\mu))$.

For a lower bound of (\ref{coeff_sum}) we use that $\cosh$ is a convex function. Dividing (\ref{coeff_sum}) by $(N-K)(N-K-1)$ we can interpret it as mean of function values By convexity this can be bounded from below by applying the function to the mean sample point, which is
\begin{align*}
&\phi\sum\limits_{\substack{j,k=1\\|j-k|> K}}^{N}\frac{(N+1-j-k)+(N-|j-k|)}{(N-K)(N-K-1)}\\
=&\frac{\phi}{3}(2N-2K-1).
\end{align*}
Therefore, (\ref{coeff_sum}) is bounded from below by
\begin{align*}
(N-K)(N-K-1)\cosh\left(\frac{(2N-2K-1)\phi}{3}\right).
\end{align*}
Since $N-K\geq2$, we have $(N-K)(N-K-1)\geq(N-K)^2/2$ and $2N-2K-1\geq1.5(N-K)$. We use $\cosh(x)\geq\frac{1}{2}e^x$ for $x\geq0$ and get the lower bound
\begin{align*}
\frac{1}{4}(N-K)^2e^{(N-K)\phi/2}.
\end{align*}
From this (\ref{lower_bound}) directly follows. It remains to show the bounds and limits for $G(\mu,N)$. We show that it is monotonically increasing in $\mu$ and prove the limits. The inequalities then follow as a consequence.

Since $\phi$ and $\mu$ are not independent, we first need to rewrite $G$ in terms of only one of these variables. In this case the analysis is much easier when we rewrite $\mu$ as $\mu=\frac{1}{2}(\cosh(\phi)-1)^{-1}$. We obtain
\begin{align*}
G(\phi,N)=2\frac{\cosh(\phi)-1}{\sinh(\phi)\sinh(N\phi)}.
\end{align*}
Since $\phi$ is monotonically decreasing in $\mu$, $G(\mu,N)$ is increasing if and only if $G(\phi,N)$ is decreasing. For this, we analyze the derivative of $G(\phi,N)$ in $\phi$. The denominator of the derivative will be the squared denominator of $G(\phi,N)$ and thus always positive. Hence, it is enough to show that the numerator is negative to prove that $G(\phi,N)$ is decreasing. Calculating the numerator of the derivative and simplifying we obtain
\begin{align}\label{second_term}
2(\cosh(\phi)-1)(\sinh(N\phi)-N\sinh(\phi)\cosh(N\phi)).
\end{align}
Note that $\cosh(\phi)-1$ is always positive. Thus we need to show that the second term is always negative. For $\phi=0$ the second term becomes $0$. The derivative of the second term is
\begin{align*}
N\cosh(N\phi)(1-\cosh(\phi))-N^2\sinh(\phi)\sinh(N\phi),
\end{align*}
which is all negative. Hence, the second term of (\ref{second_term}) starts at $0$ and is decreasing, i.e., it is always negative. Thus $G(\phi,N)$ is decreasing and finally $G(\mu,N)$ is monotonically increasing in $\mu$.

For the limits we simply use Lemma \ref{coeff_form_sum} to see that
\begin{align*}
\lim\limits_{\mu\rightarrow\infty}\left((\bm{I}+\mu\bm{T})^{-1}\right)_{jk}=\frac{1}{N}.
\end{align*}
Furthermore, from the continuity of matrix inversion, we also know that
\begin{align*}
\lim\limits_{\mu\rightarrow0}\left((\bm{I}+\mu\bm{T})^{-1}\right)_{jk}=\begin{cases}
1 & j=k\\ 0 & j\neq k
\end{cases}.
\end{align*}
Now using Lemma \ref{cor_analyticInverse} again, we can rewrite $G(\mu,N)$ as
\begin{align*}
G(\mu,N)=\frac
{2\left((\bm{I}+\mu\bm{T})^{-1}\right)_{jk}}
{\cosh((N+1-j-k)\phi)+\cosh((N-|j-k|)\phi)}
\end{align*}
for all $j,k$. Note that $\lim\limits_{\mu\rightarrow0}\phi=\infty$ and $\lim\limits_{\mu\rightarrow\infty}\phi=0$. The limits for $G(\mu,N)$ now directly follow by applying the limits to the right-hand side.
\end{proof}
Corollary \ref{cor_approx} gives an approximation error bound with one exponentially decreasing and one quadratically decreasing part. It also shows that for some matrices this error bound is achieved up to some small constants. For small $\phi$ (large regularization parameter $\mu$) the exponential decay rate can become slow. Hence, we give the following error bound that is more useful in this case.
\begin{corollary}\label{cor_approx2}
Let $\|\bm{D}_{:k}\|_2\leq1$ for all $k=1,\ldots,N$. Then for all $\bm{\lambda}\in\mathbb{N}^N$ and $K<N-1$ we have
\begin{align*}
	&|\tau_{N-1}(\bm{\lambda})-\tau_K(\bm{\lambda})|\\
	\leq& \frac{1}{2}G(\mu,N)(N-K)^2e^{(N-K)^2\phi^2/2}.
\end{align*}
\end{corollary}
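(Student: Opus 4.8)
The plan is to reuse the upper-bound argument from the proof of Corollary~\ref{cor_approx} essentially verbatim, replacing only the final elementary estimate on $\cosh$. First I would repeat the opening steps: since $\|\bm{D}_{:k}\|_2\leq1$ and all entries of $(\bm{I}+\mu\bm{T})^{-1}$ are nonnegative by Lemma~\ref{cor_analyticInverse}, Cauchy--Schwarz on each inner product reduces the problem to bounding the sum of inverse-matrix coefficients over the index set $|j-k|>K$. Pulling the common denominator $2\mu\sinh\phi\sinh(N\phi)$ out of the analytic formula in Lemma~\ref{cor_analyticInverse} accounts for the prefactor $\tfrac12 G(\mu,N)$, so it remains to bound the numerator sum~(\ref{coeff_sum}).

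By the identical counting and monotonicity argument already carried out for Corollary~\ref{cor_approx} (the largest summand occurs at $j=1$, $k=K+2$, and there are $(N-K)(N-K-1)/2$ terms in total), the sum~(\ref{coeff_sum}) is bounded above by $(N-K)^2\cosh((N-K)\phi)$; this is precisely inequality~(\ref{cosh_inequality}), which I would cite rather than rederive.

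The only genuinely new ingredient is the final step. Instead of the bound $\cosh(x)\leq2e^x$ used in Corollary~\ref{cor_approx}, I would apply the sharper-for-small-argument inequality
\begin{align*}
\cosh(x)\leq e^{x^2/2}\qquad\text{for all }x\in\mathbb{R}.
\end{align*}
This is verified by a term-by-term comparison of the power series $\cosh(x)=\sum_{n\geq0}x^{2n}/(2n)!$ and $e^{x^2/2}=\sum_{n\geq0}x^{2n}/(2^n n!)$, using the factorization $(2n)!=(2n-1)!!\,2^n n!\geq 2^n n!$. Taking $x=(N-K)\phi$ turns $(N-K)^2\cosh((N-K)\phi)$ into $(N-K)^2 e^{(N-K)^2\phi^2/2}$, and combining with the prefactor $\tfrac12 G(\mu,N)$ yields exactly the claimed bound. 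There is essentially no obstacle: once one recognizes that $\cosh(x)\leq e^{x^2/2}$ is the correct replacement, the corollary follows immediately from machinery already in place. The purpose of the variant is only that $e^{(N-K)^2\phi^2/2}$ beats $e^{(N-K)\phi}$ precisely when $(N-K)\phi<2$, i.e.\ in the small-$\phi$ (large-$\mu$) regime flagged just before the statement.
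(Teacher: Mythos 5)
Your proposal is correct and is essentially identical to the paper's own proof, which simply replaces the bound $\cosh(x)\leq2e^x$ in (\ref{cosh_inequality}) by $\cosh(x)\leq e^{x^2/2}$ while reusing the rest of the argument from Corollary \ref{cor_approx}. Your additional power-series verification of $\cosh(x)\leq e^{x^2/2}$ and the bookkeeping of the prefactor $\tfrac12 G(\mu,N)$ are both sound details that the paper leaves implicit.
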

\begin{proof}
Instead of $\cosh(x)\leq2e^x$ we use $\cosh(x)\leq e^{x^2/2}$ on (\ref{cosh_inequality}).
\end{proof}

Now, we define the best $C$-Lipschitz continuous $K$-approximation as
\begin{align}\label{best_K_approx}
\bm{\lambda}^\text{max}_K=\arg\max\limits_{\substack{\bm{\lambda}\in\mathbb{N}^N\\|\bm{\lambda}_k-\bm{\lambda}_{k+1}|\leq C}}\tau_K(\bm{\lambda}).
\end{align}
Note that $-\tau_{N-1}(\bm{\lambda}^\text{max}_{N-1})$ is by definition the optimal value of (\ref{U_lambda_min_prob}). We will use the $K$-Approximation to find the shift vector, but the quadratic system (\ref{U_lambda_min_prob}) will be solved exactly. Thus, our algorithm will return the value $-\tau_{N-1}(\bm{\lambda}^\text{max}_{K})$. We now get the following error bounds directly following from Corollary \ref{cor_approx} and \ref{cor_approx2}.
\begin{theorem}\label{thm_errorBounds}
Let $\|\bm{D}_{:k}\|_2\leq1$ for all $k=1,\ldots,N$. Then for $K<N-1$ we have
\begin{align*}
&\tau_{N-1}(\bm{\lambda}^\text{max}_{N-1})-\tau_{N-1}(\bm{\lambda}^\text{max}_{K})\\
\leq&G(\mu,N)(N-K)^2\begin{cases}
2e^{(N-K)\phi} \\
e^{(N-K)^2\phi^2/2}
\end{cases}
\end{align*}
where both bounds hold but depending on the regularization $\mu$ one might be preferable over the other.
\end{theorem}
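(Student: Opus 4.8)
The plan is to exploit the fact that both Corollary \ref{cor_approx} and Corollary \ref{cor_approx2} bound the gap $|\tau_{N-1}(\bm{\lambda})-\tau_K(\bm{\lambda})|$ \emph{uniformly} over all $\bm{\lambda}\in\mathbb{N}^N$, combined with the defining optimality of the two maximizers from (\ref{best_K_approx}). Let $B$ denote whichever of the two per-$\bm{\lambda}$ bounds we use, so that $|\tau_{N-1}(\bm{\lambda})-\tau_K(\bm{\lambda})|\leq B$ holds in particular for every $C$-Lipschitz continuous $\bm{\lambda}$. The quantity to be estimated is nonnegative, since $\bm{\lambda}^\text{max}_{N-1}$ maximizes $\tau_{N-1}$ over the admissible set and hence $\tau_{N-1}(\bm{\lambda}^\text{max}_{N-1})\geq\tau_{N-1}(\bm{\lambda}^\text{max}_{K})$, so no absolute value is needed on the left-hand side.

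The core is a three-term comparison. First I would replace $\tau_{N-1}$ by $\tau_K$ at the true optimizer using the uniform bound, $\tau_{N-1}(\bm{\lambda}^\text{max}_{N-1})\leq\tau_K(\bm{\lambda}^\text{max}_{N-1})+B$. Next, because $\bm{\lambda}^\text{max}_K$ is by (\ref{best_K_approx}) the maximizer of $\tau_K$, we have $\tau_K(\bm{\lambda}^\text{max}_{N-1})\leq\tau_K(\bm{\lambda}^\text{max}_{K})$. Finally I would switch back from $\tau_K$ to $\tau_{N-1}$ at the approximate optimizer, again via the uniform bound, $\tau_K(\bm{\lambda}^\text{max}_{K})\leq\tau_{N-1}(\bm{\lambda}^\text{max}_{K})+B$. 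Chaining these three inequalities telescopes to
\[
\tau_{N-1}(\bm{\lambda}^\text{max}_{N-1})-\tau_{N-1}(\bm{\lambda}^\text{max}_{K})\leq 2B.
\]

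Substituting $B=G(\mu,N)(N-K)^2e^{(N-K)\phi}$ from Corollary \ref{cor_approx} gives the first case, while substituting $B=\tfrac{1}{2}G(\mu,N)(N-K)^2e^{(N-K)^2\phi^2/2}$ from Corollary \ref{cor_approx2} gives the second; the convenient factor $\tfrac{1}{2}$ there exactly absorbs the doubling. There is no genuine obstacle in this argument: it is the standard device for transferring an approximation guarantee on an objective into a suboptimality guarantee on its maximizer. The only point requiring care is that the two swaps occur at two \emph{different} arguments, $\bm{\lambda}^\text{max}_{N-1}$ and $\bm{\lambda}^\text{max}_{K}$, which is precisely why the uniformity of the corollary bounds over all $\bm{\lambda}$ is indispensable and why the final constant is $2B$ rather than $B$.
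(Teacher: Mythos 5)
Your proposal is correct and is essentially identical to the paper's own proof: the same three-step chain (replace $\tau_{N-1}$ by $\tau_K$ at $\bm{\lambda}^\text{max}_{N-1}$, use optimality of $\bm{\lambda}^\text{max}_{K}$ for $\tau_K$, then swap back to $\tau_{N-1}$ at $\bm{\lambda}^\text{max}_{K}$), yielding the bound $2E$ with $E$ from Corollary \ref{cor_approx} or \ref{cor_approx2}. Your observation that the factor $\tfrac{1}{2}$ in Corollary \ref{cor_approx2} absorbs the doubling matches how the paper's stated cases arise.
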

\begin{proof}
Let $E$ be the error bound obtained either by Corollary \ref{cor_approx} or \ref{cor_approx2}. Then we have
\begin{align*}
&\tau_{N-1}(\bm{\lambda}^\text{max}_{N-1})-\tau_{N-1}(\bm{\lambda}^\text{max}_{K})\\
\leq&\tau_K(\bm{\lambda}^\text{max}_{N-1})-\tau_{N-1}(\bm{\lambda}^\text{max}_{K})+E\\
\leq&\tau_K(\bm{\lambda}^\text{max}_{K})-\tau_{N-1}(\bm{\lambda}^\text{max}_{K})+E\\
\leq& 2E
\end{align*}
\end{proof}
\begin{remark}
The requirement $\|\bm{D}_{:k}\|_2\leq1$ is just set to simplify the notation a bit. A closer look at the proof of Corollary \ref{cor_approx} shows that any bound on $\left|\langle\bm{D}_{:j},\mathcal{S}_{\bm{\lambda}_j-\bm{\lambda}_k}(\bm{D}_{:k})\rangle\right|$ would just carry over as a constant into the error bound. A quite simple (but also rough) bound is $\|\bm{D}\|_F^2/2$.
\end{remark}

Now that we have proven that we can approximate (\ref{U_lambda_min_prob}) by solving (\ref{best_K_approx}), we can finally discuss the algorithm itself. The idea is, to simply construct a graph on which the optimal solution coincides with the longest path between two points on the graph. We can then use standard graph algorithms to solve the problem. The graph construction is given in the following definition.

\begin{definition}
Let $K<N-1$ and the Lipschitz constant $C\in\mathbb{N}$ be given. We construct the $(K,C)$-approximation graph as follows.

For all integers $-C\leq r_j\leq C$ with $j=1,\ldots,N$ we create vertices labeled $(j,r_{j-1},\ldots,r_{\max(1,j-K+1)})$. Furthermore, we create one vertex labeled $(N+1)$.

Now, we add a directed edges labeled as $(j,r_{j-1},\ldots,r_{\max(1,j-K)})$ going from vertex $$(j-1,r_{j-2},\ldots,r_{\max(1,j-K)})$$ to vertex $$(j,r_{j-1},\ldots,r_{\max(1,j-K+1)}).$$ Furthermore, we add $0$-weight edges from all vertices $(N,\ldots)$ to $(N+1)$. The weight of all other edges $(j,r_{j-1},\ldots,r_{\max(1,j-K)})$ is given by
\begin{align*}
\sum\limits_{k=j-1}^{\max(1,j-K)}
\left((\bm{I}+\mu\bm{T})^{-1}\right)_{jk}
\langle\bm{D}_{:j},\mathcal{S}_{s_{k,j-1}}(\bm{D}_{:k})\rangle
\end{align*}
with
\begin{align*}
s_{k,j-1}=\sum\limits_{l=k}^{j-1}r_l.
\end{align*}
\end{definition}
The intuition behind this graph is as follows. Let $\bm{\lambda}$ be any Lipschitz continuous shift vector. Define $\tilde{r}_j=\bm{\lambda}_j-\bm{\lambda}_{j+1}$ for $j=1,\ldots,N-1$. Summing up the weights of the edges $(j,\tilde{r}_{j-1},\ldots,\tilde{r}_{\max(1,j-K)})$ for $j=1,\ldots,N$ yields
\begin{align*}
\sum\limits_{\substack{j,k=1\\|j-k|\leq K\\j> K}}^{N}
\left((\bm{I}+\mu\bm{T})^{-1}\right)_{jk}
\langle\bm{D}_{:j},\mathcal{S}_{\bm{\lambda}_j-\bm{\lambda}_k}(\bm{D}_{:k})\rangle
\end{align*}
which is $\tau_K(\bm{\lambda})/2$ up to a constant term. (See (\ref{K_approximation}), use that all involved terms are symmetric and the terms are independent of $\bm{\lambda}$ for $j=k$.) Hence each shift vector describes a path from vertex $(1)$ to $(N+1)$ (adding a $0$-weight edge at the end). The reverse also holds. Any path in the graph from $(1)$ to $(N+1)$ can be associated to a shift vector. The directed edges are constructed in a way, that a path has to pass through $N+1$ vertices of the form $(1)$, $(2,r_1)$, $(3,r_2,r_1)$, $\ldots$, $(N,\ldots)$, $(N+1)$. The edges also ensure that the relative shift $\bm{\lambda}_j-\bm{\lambda}_{j+1}$ is consistent throughout the connected vertices. Hence, finding the best $K$-approximation (\ref{best_K_approx}) is equivalent to finding the path with the largest weight sum from $(1)$ to $(N+1)$. This problem is also known as longest path problem and is NP-hard in general but luckily can be solved in linear time for directed acyclic graphs which the $(K,C)$-approximation graph is. Altogether, we obtain the following algorithm for object reconstruction using K-approximation (ORKA).

\begin{algorithm}
	\KwData{data $\bm{D}\in\mathbb{R}^{M\times N}$, parameters $\mu\geq0$, $C,K\in\mathbb{N}$}
	\KwResult{object $\mathcal{S}_{\bm{\lambda}}(\bm{U})$}
	Construct the $(K,C)$-approximation graph\;
	Find longest path from vertex $(1)$ to $(N+1)$\;
	Reconstruct $\bm{\lambda}$ from the path\;
	Solve the quadratic optimization (\ref{U_min_prob}) for $\bm{U}$\;
	\caption{ORKA - object reconstruction using {$K$-approximation}}
\end{algorithm}

\begin{remark}
The $(K,C)$-approximation graph has $O(N(2C+1)^{K-1})$ vertices and $O(N(2C+1)^K)$ edges, thus it scales exponentially. If the sampling rate of the data is not too low (compared to the object speed), then $C$ will be small. $K$ can then be chosen accordingly to balance approximation error and computational complexity. Moreover, the required memory can be reduced further when realizing that the graph is indeed $(N+1)$-partite and a good implementation would only require one partition at a time to be in memory.

Furthermore, note that $\bm{\lambda}$ can only be reconstructed up to a constant from its relative shifts $\bm{\lambda}_j-\bm{\lambda}_{j+1}$, i.e., $\bm{\lambda}+t$ will have the same differences for any $t\in\mathbb{N}$. We assume here that $\bm{\lambda}_1=0$. This eliminates ambiguities of the original problem (\ref{U_lambda_min_prob}) where each $\bm{\lambda}+t$ would result in the same optimal value.
\end{remark}

\begin{remark}
We can change optimization problem (\ref{U_lambda_min_prob}) by considering non-equidistant sampling, or extending the penalty term to consider more than one neighboring column. The presented algorithm can still be applied as long as the resulting system matrix is invertible. Error bounds can be obtained in a similar manner as long as values (or bounds) for the entries of the inverse system matrix are known.
\end{remark}

\section{Numeric}

In this section we present some numerical experiments to demonstrate the ORKA algorithm. In the first subsection we analyze the runtime and approximation error for different parameter setups. In the second subsection we apply the algorithm to seismic and ultrasonic data for denoising and object reconstruction. Finally, in the last subsection we show how the method can be applied to 3D video data.

Note that the proposed method is not suitable for sparse data approximation. In this problem one seeks for an efficient representation (or approximation) of given data, i.e., a representation that is as exact as possible while having a small amount of storage cost. For example, Wavelets or Curvelets are used to achieve this. Prominent formats such as mp3 or jpg try to solve this exact problem. The ORKA algorithm however does not fit in this setup for two reasons. First, the storage cost for one object ($\bm{\lambda}$ and $\bm{U}$) are already higher than just storing the data $\bm{D}$ itself. Second, as we have seen in the discussion after (\ref{U_lambda_min_prob}), we can simply choose $\mu=0$ to obtain perfect approximation. Instead, ORKA should be used with parameters $C,\mu$ that suit the application. We give examples on how this can be done in the following experiments.

\subsection{Error and Runtime}

We analyze the runtime and approximation error of the proposed method. The runtime experiment was performed on a 6 core i7-8700 3.2Ghz with 64GB memory using MatLab R2020a where part of the code was coded in a C++ mex-file that is able to run the code multi-threaded (i.e., parallel). We only measured the runtime of $K$-approximation (i.e., recovering $\bm{\lambda}$). The complexity of solving the quadratic system can vary depending on the preferred method. First, we measure the runtime for increasing parameter $K$ while $C=1$ and $M=N=64$ are fixed. We would expect a complexity of $O((2C+1)^K)$ which is the number of edges per partition in the $(K,C)$-approximation graph. In Figure \ref{fig:runtime_K} we show the runtime for the parallel and serial (non-parallel) implementation. A third line that is exactly $O((2C+1)^K)$ is given as reference. Note that the y-axis is log-scaled. We can see that both implementations behave exactly as expected. The parallel implementation has a larger overhead at the beginning but is more efficient for larger $K$.

Next, we analyze the runtime for increasing matrix size $\bm{D}$. We use a squared matrix $\bm{D}\in\mathbb{R}^{N\times N}$ and measure the runtime against increasing $N$. The number of partitions of the $(K,C)$-approximation graph increases linearly in $N$, so we would expect a linear increase in runtime from this. However, we also need to calculate the required inner products between different column shifts. This can either be done by calculating all inner products using Fourier transform ($O(K^2N\log N)$) and then picking the required values, or by calculating only the required shift combinations ($O(K^3(2C+1)N)$). We are using the implementation using Fourier transform here. Figure \ref{fig:runtime_N} shows the obtained runtime. As we can see, the increase is indeed nearly linear. We use the non-parallel implementation here to avoid the overhead to distort the measurements. The efficiency of the Fast Fourier Transform relies on the data length $N$ being a multiple of $2$ or at least having only small prime factors. Hence, we observe spikes in the runtime, whenever very large prime factors appear in $N$. As reference, we have plotted the largest prime of $N$ on the bottom of the graph (use the right y-axis).

\begin{figure}
\centering
\subfloat[]{
\includegraphics[height=.4\columnwidth]{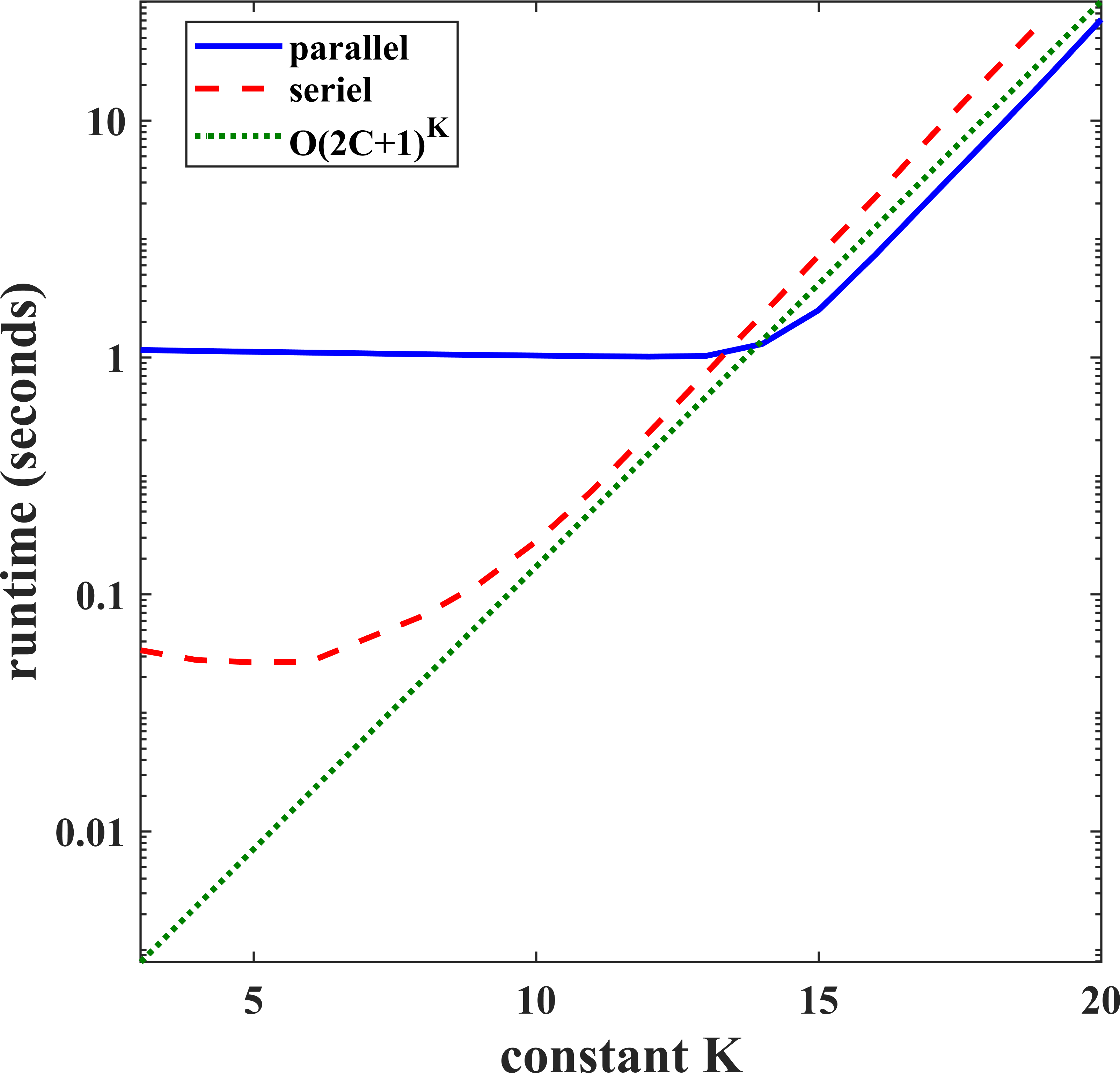}
\label{fig:runtime_K}}
\subfloat[]{
\includegraphics[height=.4\columnwidth]{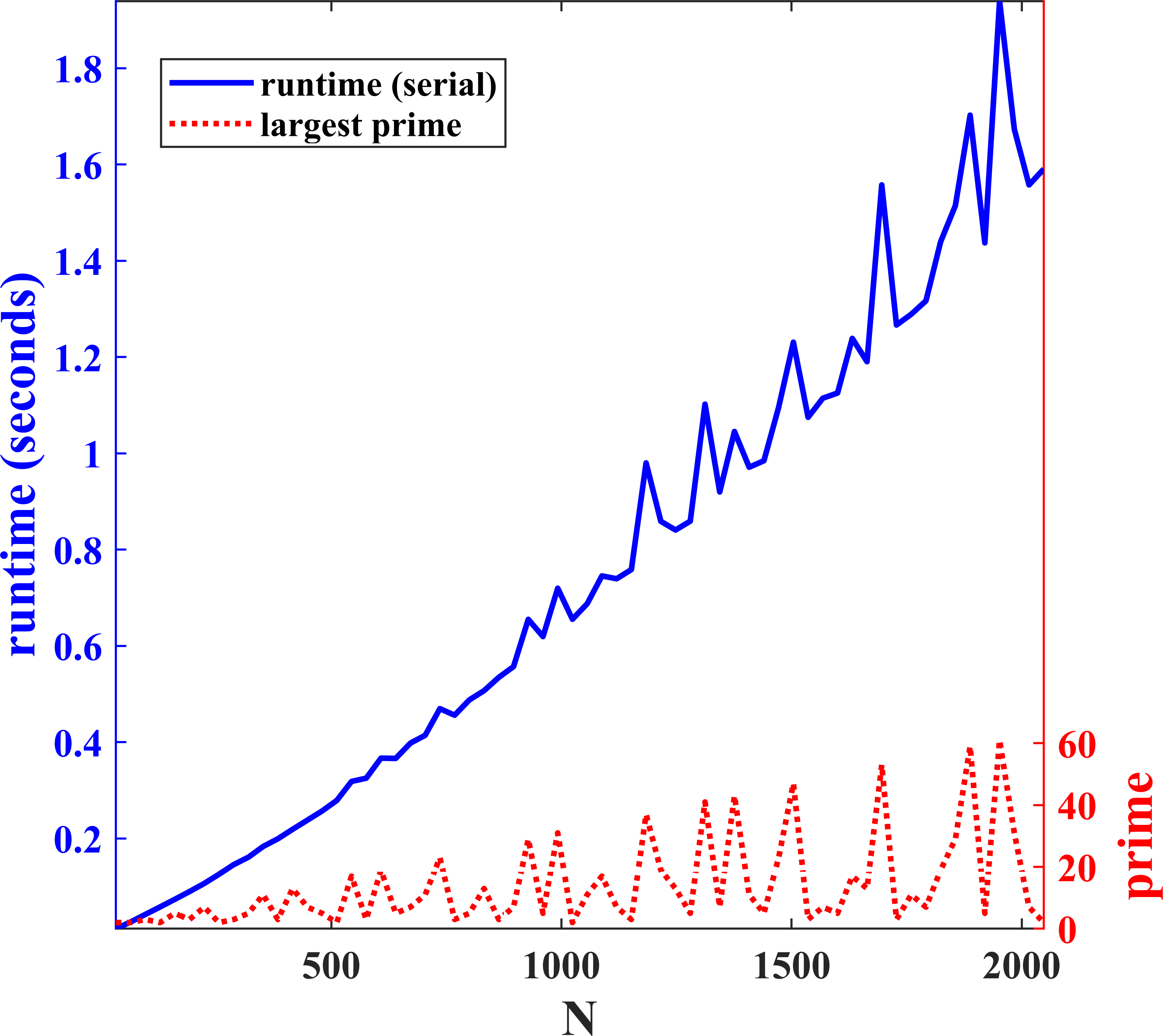}
\label{fig:runtime_N}}
\caption{Runtime of $K$-approximation depending on (a) $K$ and (b) matrix size $N\times N$.}
\end{figure}

Next, we analyze the behavior of $K$-approximation. In a first experiment construct the matrix
\begin{align*}
	\bm{D}=\text{diag}([1,1,0,1,0,0,1,0,0,0,1,\ldots])
\end{align*}
i.e., a diagonal matrix with ones on the diagonal that are separated by an increasing number of zeros. We set $C=1$ allowing a relative shift of $\pm1$ for neighboring columns and $\mu=1000$ to force highly correlated columns. Then, the best shift is $\bm{\lambda}=[0,1,2,\ldots]$, which shifts all non-zero entries into the first row. However, $K$-approximation only compares columns which are at most $K$ columns away from each other. Thus, if the $0$-caps in $\bm{D}$ become too large, $K$-approximation can no longer see the optimal path. We use $\bm{D}$ with a largest gap of $14$ zeros (see Figure \ref{fig:gap_mtx}) and run $K$-approximation for $K=3,\ldots,15$. Figure \ref{fig:gap_path} shows the reconstructed $\bm{\lambda}$. Note that we shifted all $\bm{\lambda}$ each by a different constant value to increase visibility of the plot. We see that only for $K=15$ the perfect shift vector is found. In all other cases the reconstruction gets lost when the gaps get too large and the algorithm has to choose a random direction. (In our implementation always the smallest shift is chosen, whenever the optimal shift is not unique. In this case the smallest is $-C=-1$.) Hence, if the object one wants to reconstruct cannot be found in every measurement (e.g., because the measurement failed or the object is covered by other signals), then $K$ needs to be chosen large enough to cover the gap.
	
\begin{figure}
\centering
\subfloat[]{
\includegraphics[height=.4\columnwidth]{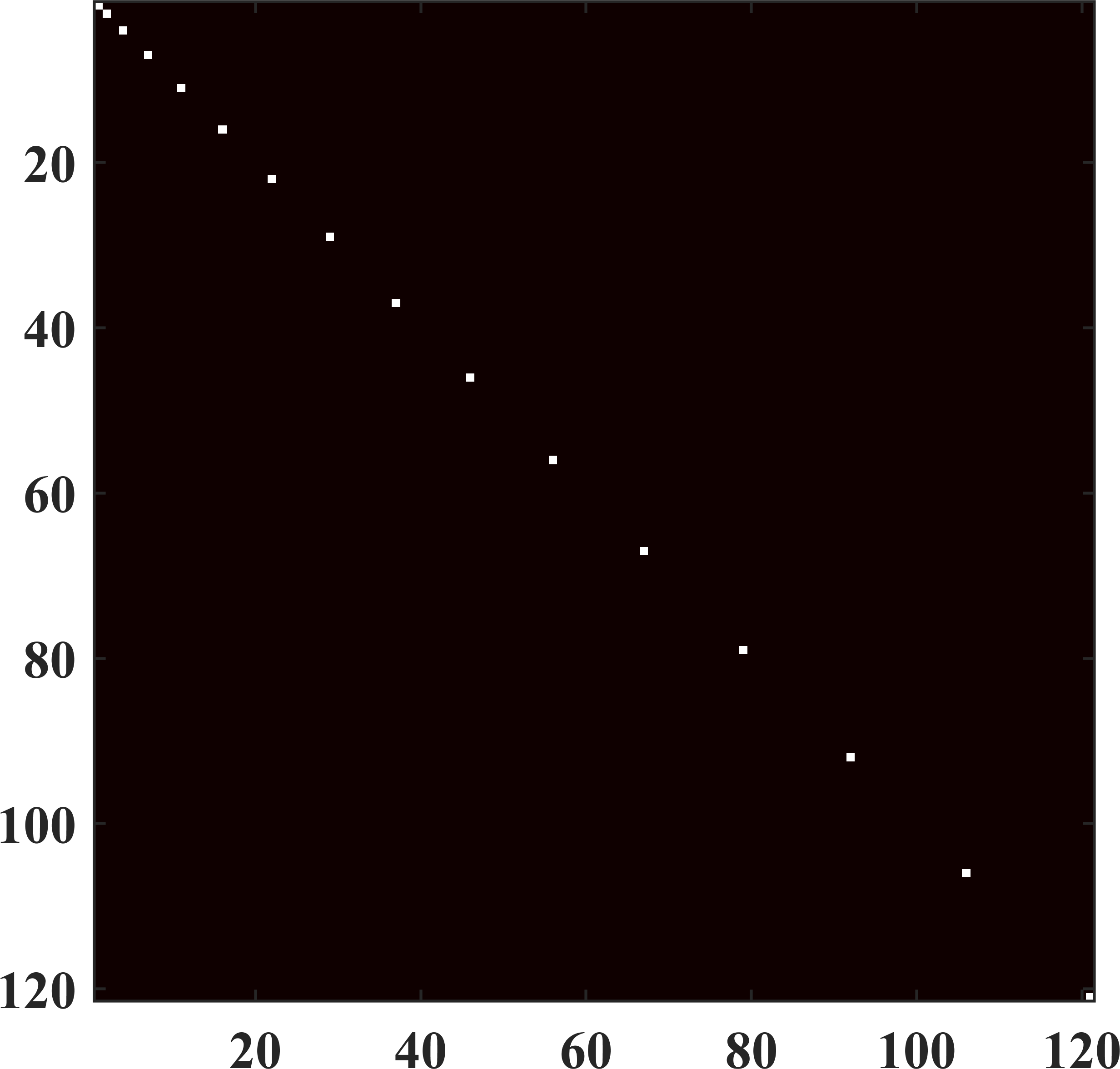}
\label{fig:gap_mtx}}
\subfloat[]{
\includegraphics[height=.4\columnwidth]{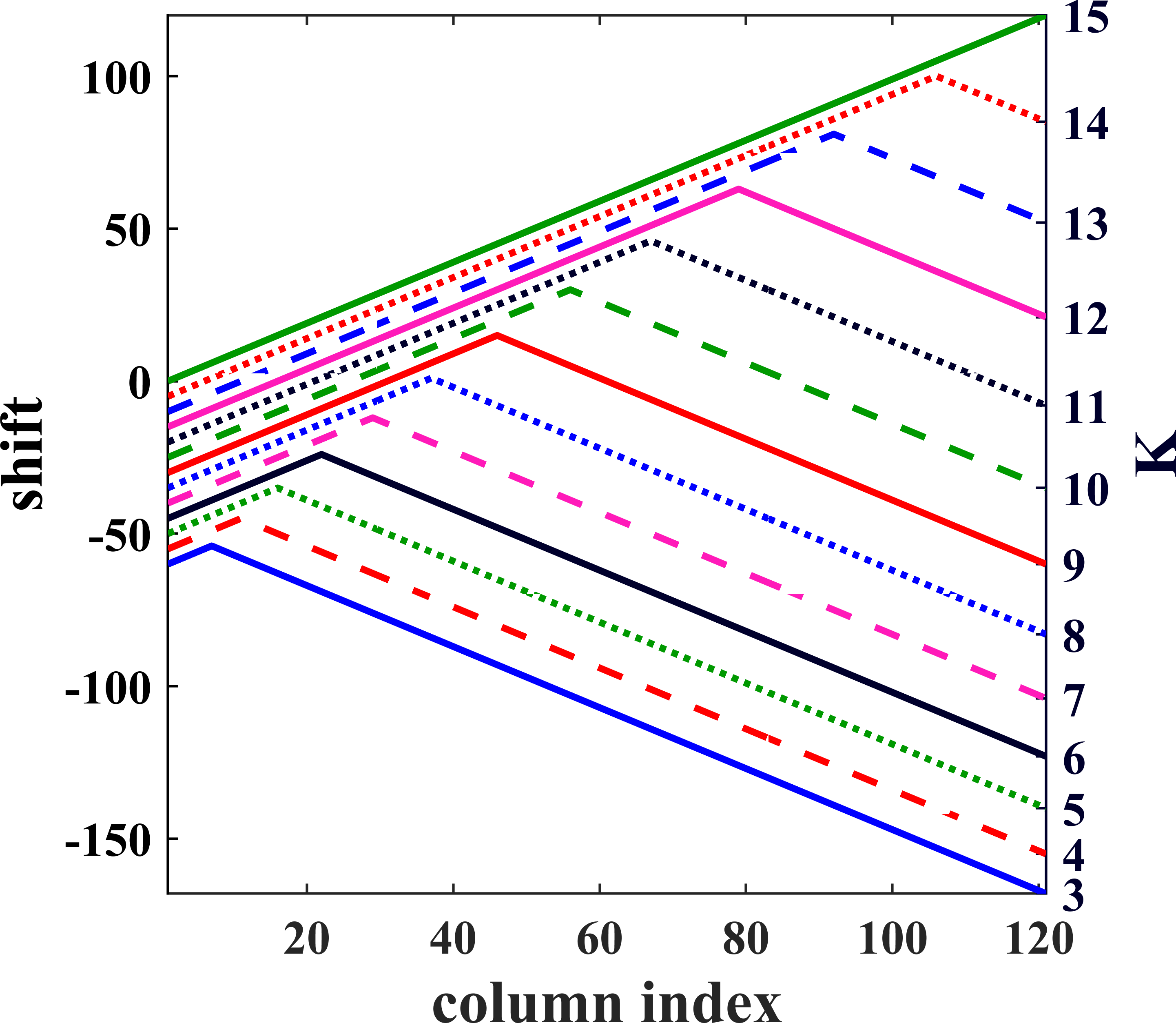}
\label{fig:gap_path}}
\caption{(a) Diagonal matrix with $0$-gaps and (b) the reconstructed shift vectors $\bm{\lambda}$ for different $K$.}
\end{figure}

Let us now test the approximation error depending on the parameter $K$. Numerically testing the approximation error is troublesome as we need to know the optimal solution $\tau_{N-1}(\bm{\lambda}^\text{max}_{N-1})$. Hence, for this experiment we restrict ourselves to small matrices $\bm{D}\in\mathbb{R}^{64\times16}$ and C=1. Under this setup, the best approximating object $\mathcal{S}_{\bm\lambda}(\bm{U})$ can still be calculated by going through all possible combinations for $\bm\lambda$. Indeed, for $K=16$ ORKA will return this global optimum without approximation error. We ran ORKA with $K=3,\ldots,16$ and different parameter $\mu$ on $100$ random matrices $\bm{D}$ (with normalized columns). The mean approximation error compared to the optimal solution over all $100$ runs is shown in Figure \ref{fig:KvsOpt}. We can see that the approximation error decreases rapidly with $K$ but larger parameter $\mu$ lead to a larger approximation error. This coincides with Theorem \ref{thm_errorBounds}.

\begin{figure}
\centering
\includegraphics[height=.4\columnwidth]{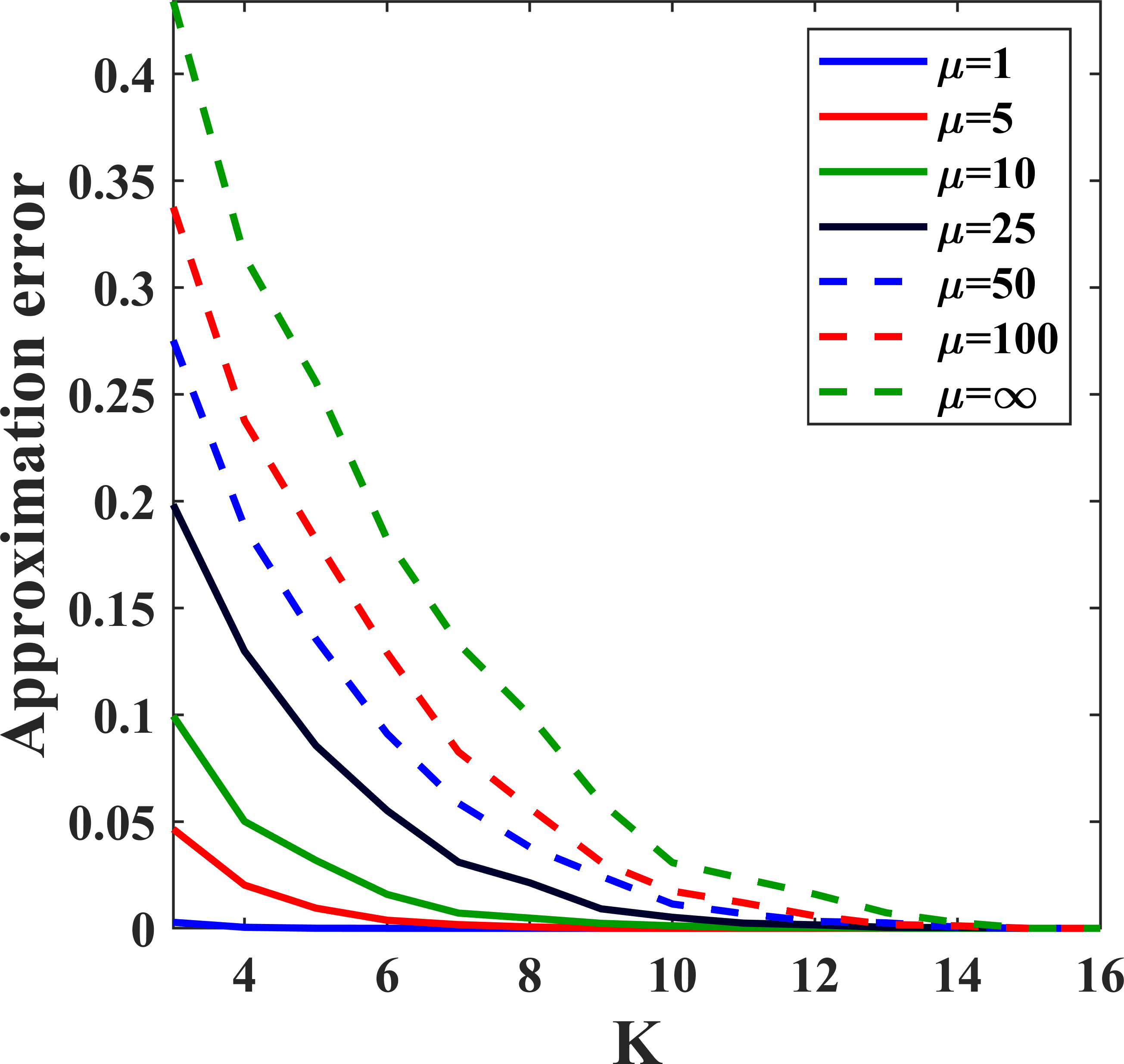}
\caption{Approximation error of ORKA against $K$ for different parameters $\mu$. For $K=16$ the global optimum of (\ref{U_lambda_min_prob}) is obtained, i.e., no approximation error.}
\label{fig:KvsOpt}
\end{figure}

\subsection{Seismic and ultrasonic data}

In this section we perform tests on $2D$ real data from two different applications. Figure \ref{fig:realData} shows the used data $\bm{D}$ from ultrasonic non-destructive testing (Figure \ref{fig:realData_ndt}) and seismic exploration (Figure \ref{fig:realData_seis}). Both applications fit the proposed model for object sparse data. In this case, the objects are (seismic or ultrasonic) waves that is measured from different probes/stations. The ultrasonic data was obtained from a steel tube test where emitter and receivers are placed on the outside surface of the tube. The data contains three relevant ultrasonic waves (see the markings in Figure \ref{fig:realData_ndt}). The first wave is a wave that travels directly along the surface from emitter to receiver. The second wave is a reflection from a defect somewhere inside the tube. The last wave is the reflection from the inner surface. It is split into two reflections on the left side of the data what could indicate a defect in the inner surface. The task is, to automatically separate the three signals and gain information about the position and type of the defect. Therefore, we want to use ORKA to extract the different waves. The obtained objects $\mathcal{S}_{\bm\lambda}(\bm{U})$ already give information about the form and position of these waves. If a more sophisticated analysis of the defect is required, it can now be done on each wave separately to avoid interference between the waves. The seismic data (Figure \ref{fig:realData_seis}) has a similar setup. Instead of steel tubes, the subsurface is analyzed using seismic waves. Boundaries of earth layers cause reflections of these waves that are then recorded at different positions on the surface. Each of these reflected waves forms one object of the data and corresponds to one subsurface earth layer.

\begin{figure}
\centering
\subfloat[]{
\includegraphics[height=.4\columnwidth]{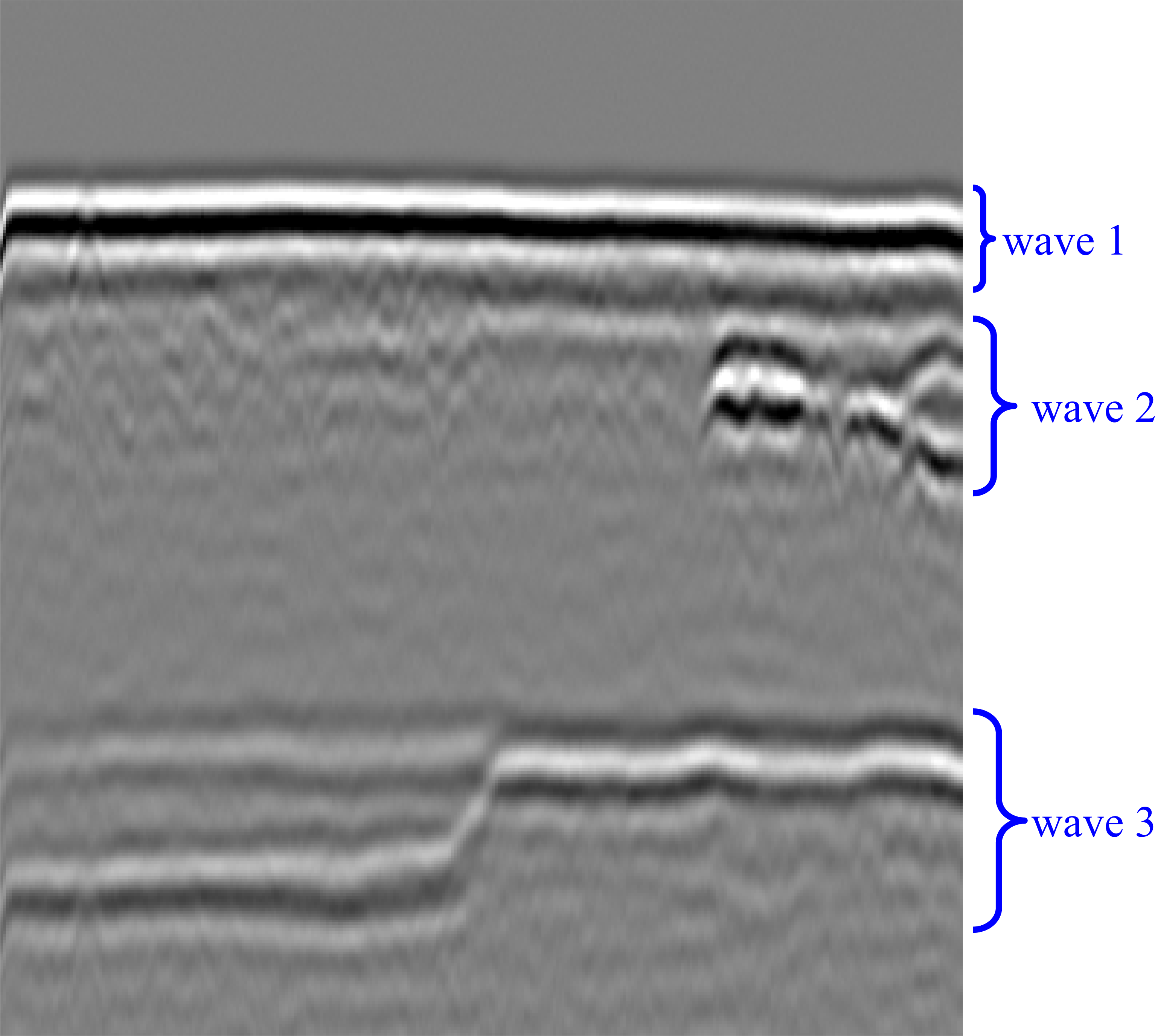}
\label{fig:realData_ndt}}
\subfloat[]{
\includegraphics[height=.4\columnwidth]{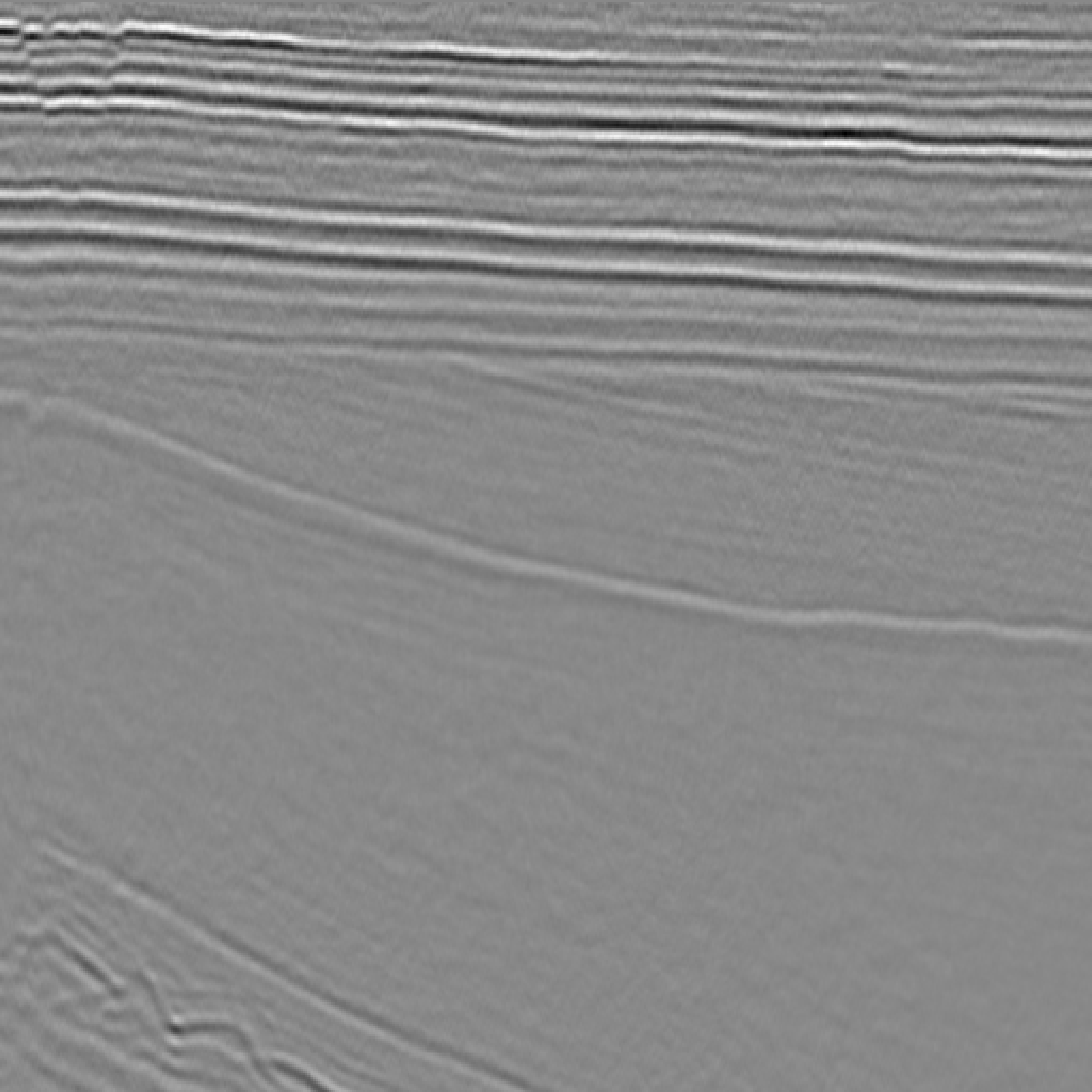}
\label{fig:realData_seis}}
\caption{Data from real world application: (a) ultrasonic non-destructive testing with three waves (objects)\protect\footnotemark[4]\ and (b) seismic exploration.}
\label{fig:realData}
\end{figure}
\footnotetext[4]{Image created using Curly brace annotation by Pål Næverlid Sævik (2022), MATLAB Central File Exchange, (\url{https://www.mathworks.com/matlabcentral/fileexchange/38716-curly-brace-annotation}).}

To obtain reasonable results with ORKA in applications, we first need to find suitable parameters $C$, $K$, and $\mu$. This can be done using the following strategies. The parameter $C$ gives the maximal shift difference between two neighboring columns. It depends on the data resolution and measurements setup. We demonstrate its calculation for the example of seismic data. Let the seismic sensors that recorded the data of two neighboring columns in $\bm{D}$ be placed exactly $1$km apart from each other. A seismic wave (P-wave) moves at a speed of at least $5$km per second. This means, if we record a wave at the first sensor, then we expect the wave to arrive at the second sensor within the next $0.2$ seconds.
(It can arrive earlier when e.g., the source of the wave is directly in between both sensors. But it can not arrive later, if we assume a minimum speed of $5$km per second.) Now let the sensors sampling rate be $5$ samples per second, i.e., one sample every $0.2$ seconds. Then the wave should be recorded at the second sensor with the next sample point, i.e., we choose $C=1$. However, when the sampling rate is increased to one sample every $0.1$ seconds, then we need to set $C=2$. Next, we have to choose the approximation rate $K$. As $K$ balances approximation error and complexity of the algorithm, it can just be set as high as possible until either the runtime of the algorithm is too long or we run out of memory. Last, we need to fix $\mu$. This parameter should be chosen according to the noise on the data. The higher the noise, the more regularization we need to apply. Unfortunately, we can not give a formula or sophisticated strategy to find $\mu$. In our experiments we always tried different parameters $\mu$ until we found the correct range. We suggest to test different parameter choices on small example data. The results only vary slowly with dependence on $\mu$ so a good value can be found easily and finding the precise and optimal $\mu$ is not necessary. Moreover, once a good value is found, the same $\mu$ can be used for any data $\bm{D}$ that was obtained under the same or a similar setup.

In our first experiment, we test ORKA against our previous method Shifted rank-1 approximation to show the advantages of the new model. In \cite{Bossmann20} ultrasonic data was most suitable for the shifted rank-1 model among all tested data types. Hence, we compare both methods on this application to see if we can gain a benefit from ORKA even when the shifted rank-1 approach is already very fitting. We repeated the object separation segmentation experiment (Section 5.4 and Figure 7 in \cite{Bossmann20}) in which we try to decompose the data Figure \ref{fig:realData_ndt} into its single components. At this point, it is important to note that both, the shifted rank-1 approach and ORKA, separate objects by their moving pattern, i.e., by the shift vector $\bm\lambda$. If two objects require the same shift, then both algorithms will group them into one object. In this example, the surface wave and the inner surface reflection will form one object as they run nearly parallel to each other.

In Figure \ref{fig:ultrasound} the first two reconstructed objects using the shifted rank-1 approach and the new ORKA method are shown. We used the parameters $C=1$, $K=5$, and $\mu=500$. Both methods were performed iteratively, i.e., the second object was recovered from the residual that remained after the first object is extracted. Both methods succeed in separating surface signals from the tube defect. However, we can see that the shifted rank-1 approach is not flexible enough to detect the defect in the inner surface while the new ORKA algorithm is. The defect is reconstructed in more detail when using ORKA, but also does slowly fade out over the data instead of being perfectly localized as the shifted rank-1 reconstruction is. This is due to the $2$-norm used in the total change (see Definition \ref{def:object}). The $2$-norm favors smooth transitions instead of hard cut-offs. A better option would be to choose the $1$-norm in Definition \ref{def:object}. However, this would make the minimization problem non-differentiable and much more complex as it already is.

\begin{figure*}
\centering
\subfloat[]{
\includegraphics[height=.22\textwidth]{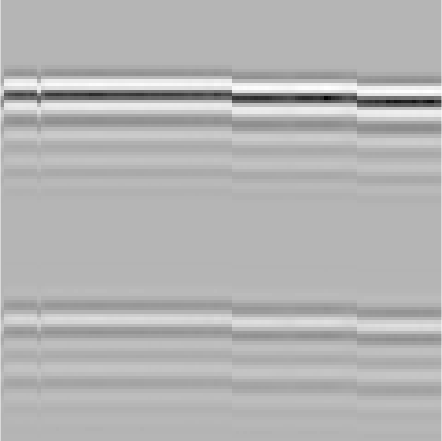}
}\subfloat[]{
\includegraphics[height=.22\textwidth]{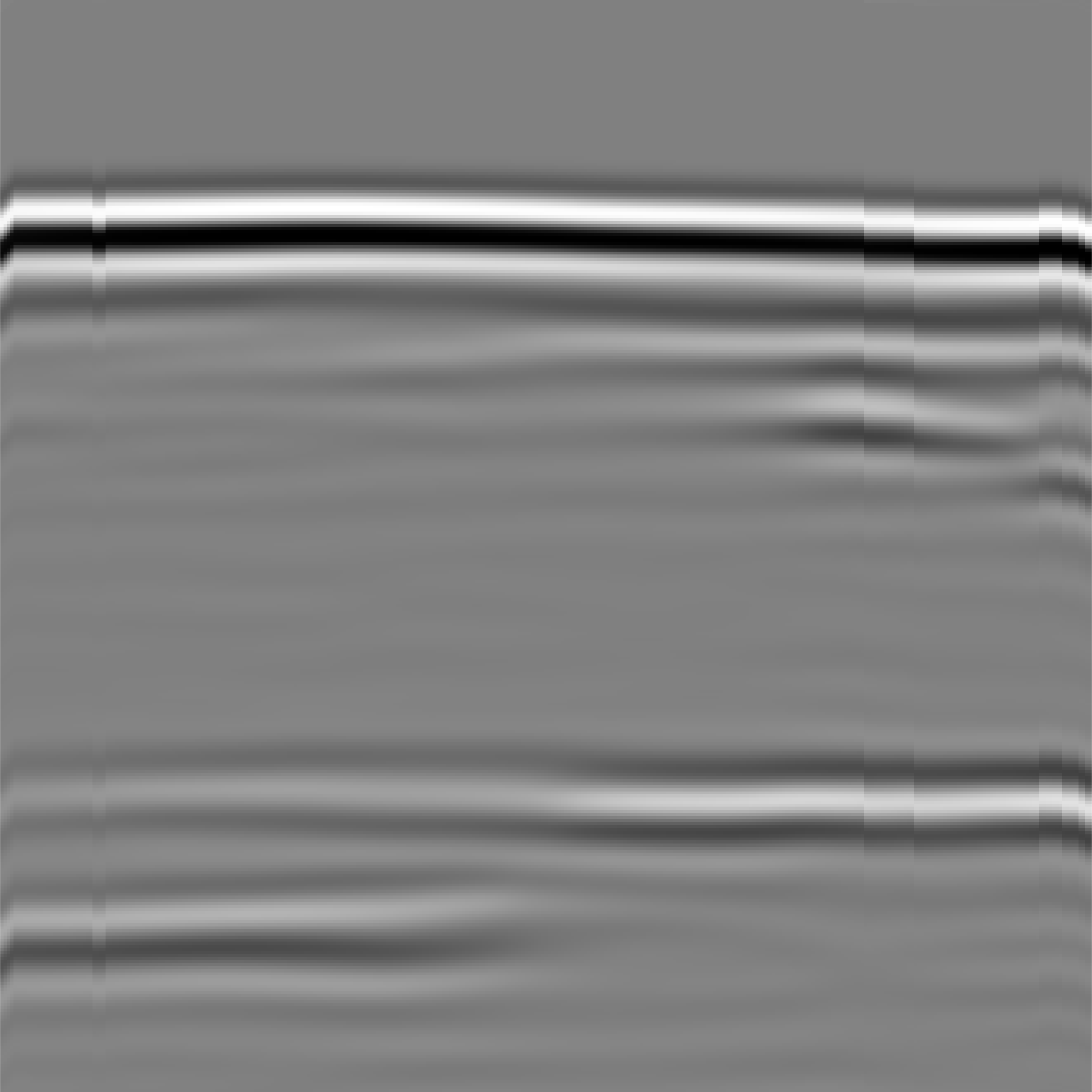}
}\subfloat[]{
\includegraphics[height=.22\textwidth]{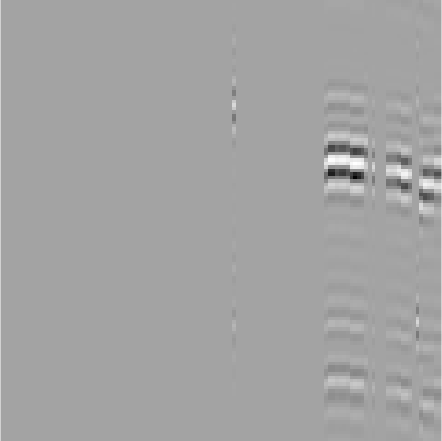}
}\subfloat[]{
\includegraphics[height=.22\textwidth]{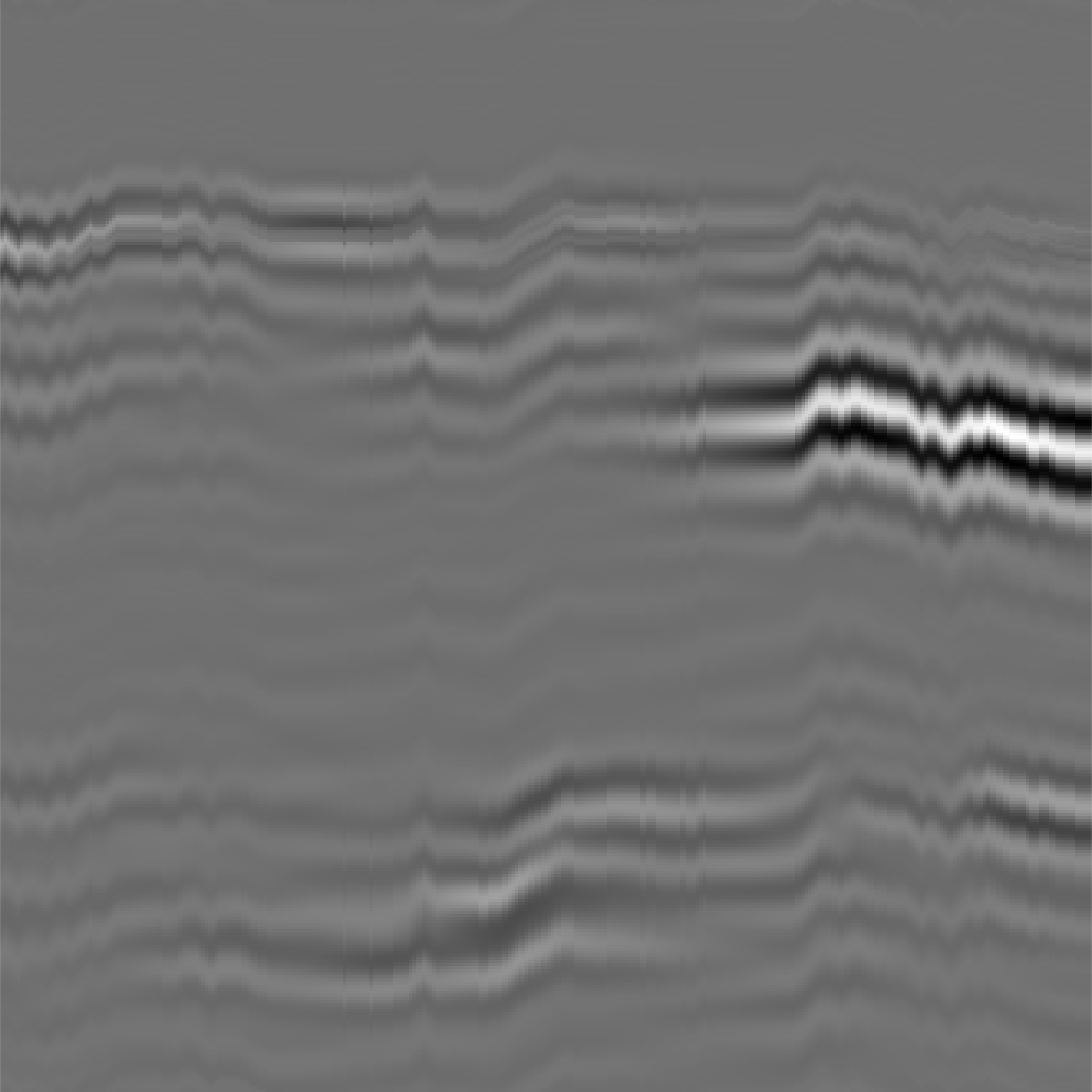}}
\caption{Reconstructed objects using Shifted rank-1 (a,c) and ORKA (b,d). The first object (a,b) is the outer and inner surface wave. The second object (c,d) is a defect in the tube.}
\label{fig:ultrasound}
\end{figure*}

In our next experiment we apply ORKA to highly noised seismic data (Figure \ref{fig:realData_seis}). We test if the algorithm is still able to track the seismic waves in the data even when its hidden under strong noise. We then take the sum of all found waves as denoised version of the signal. We compare the results to Shearlet denoising \cite{Kutyniok16} (using ShearLab3Dv1.1 toolbox for MatLab \url{www.shearlab.org}). Shearlets are especially useful for sparse approximation and denoising of many kinds of data and they have been used for seismic denoising before \cite{Kong15}. In Figure \ref{fig:PSNR_graph} we plotted the obtained PSNR value after denoising with ORKA and Shearlets against the noised PSNR value. Both algorithms were iterated over a reasonable number of parameter choices and the best result was chosen in the end. We can see that Shearlets are better in the low to mid noise level case (PSNR of 12 to 20). For a high noise level (PSNR of 6 to 12) both methods are about equal. However, for very high noise (PSNR below 6), ORKA yields better denoising results. ORKA tracks a seismic wave through all measurements and thus can use global information to denoise the data while Shearlets have a certain locality and get overwhelmed by the noise. We demonstrate this in Figure \ref{fig:PSNR_path} where the tracked movement of a wave is plotted on top of the original data. We can see that the path clearly follows the original wave, although it is slightly corrupted by the noise. The noisy data this path was obtained from is shown in Figure \ref{fig:PSNR_noise}, where one can barely see the seismic data hidden under the noise. (Also compare this to Figure 8 in \cite{Bossmann20} where the same experiment is performed but without Lipschitz-continuity restriction for the path.)

\begin{figure*}
\centering
\subfloat[]{
\includegraphics[height=.22\textwidth]{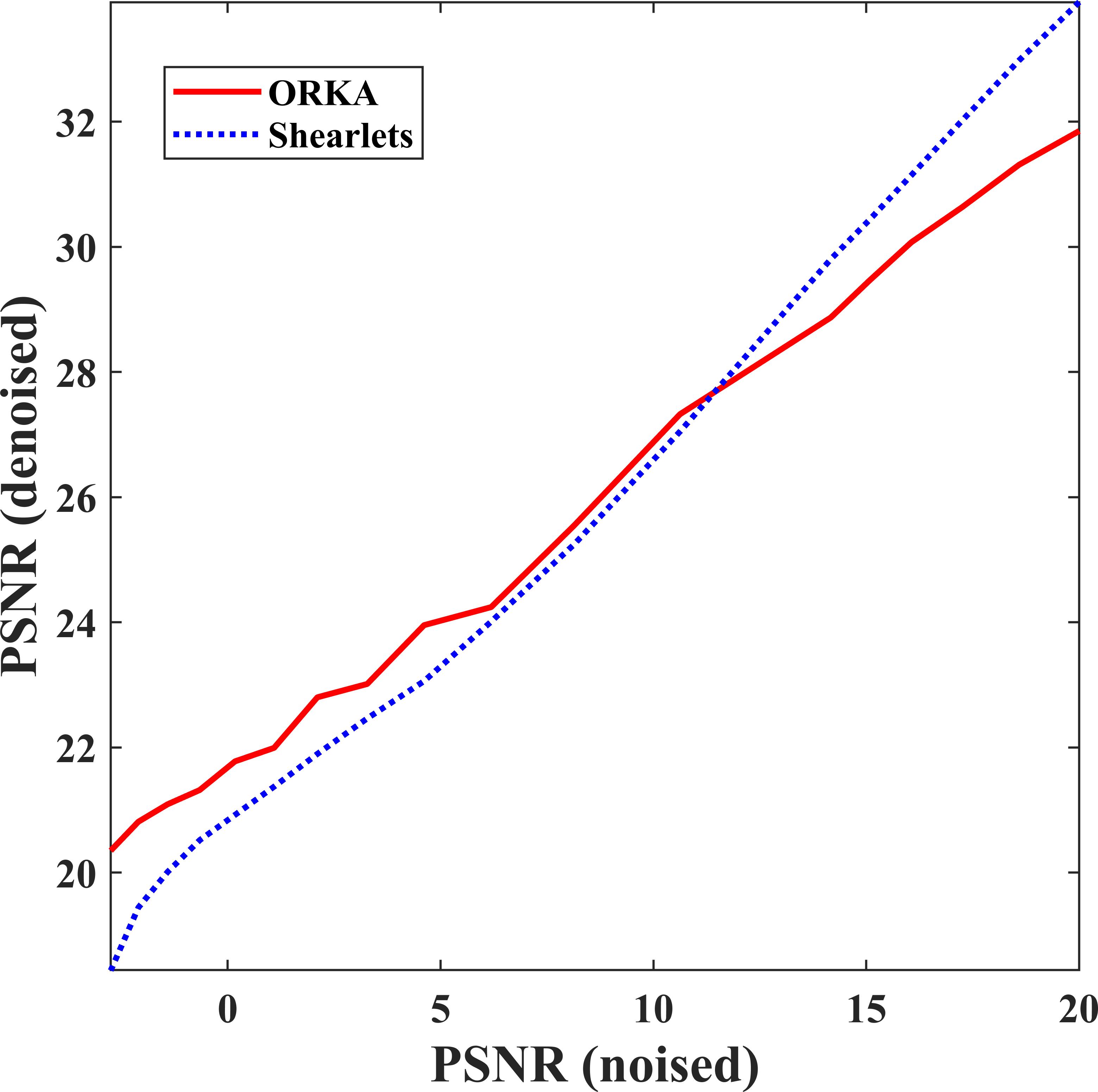}
\label{fig:PSNR_graph}}
\subfloat[]{
\includegraphics[height=.22\textwidth]{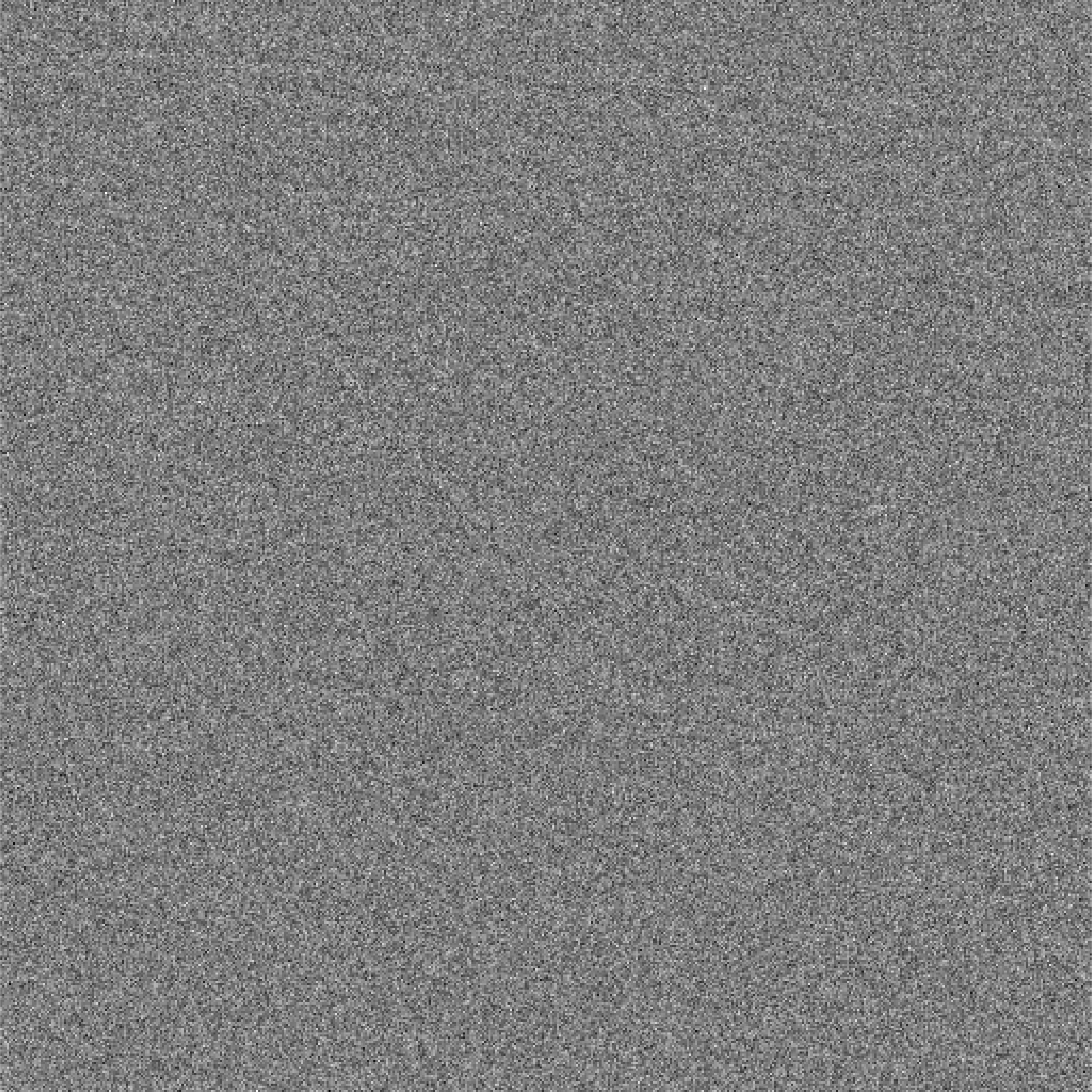}
\label{fig:PSNR_noise}}
\subfloat[]{
\includegraphics[height=.22\textwidth]{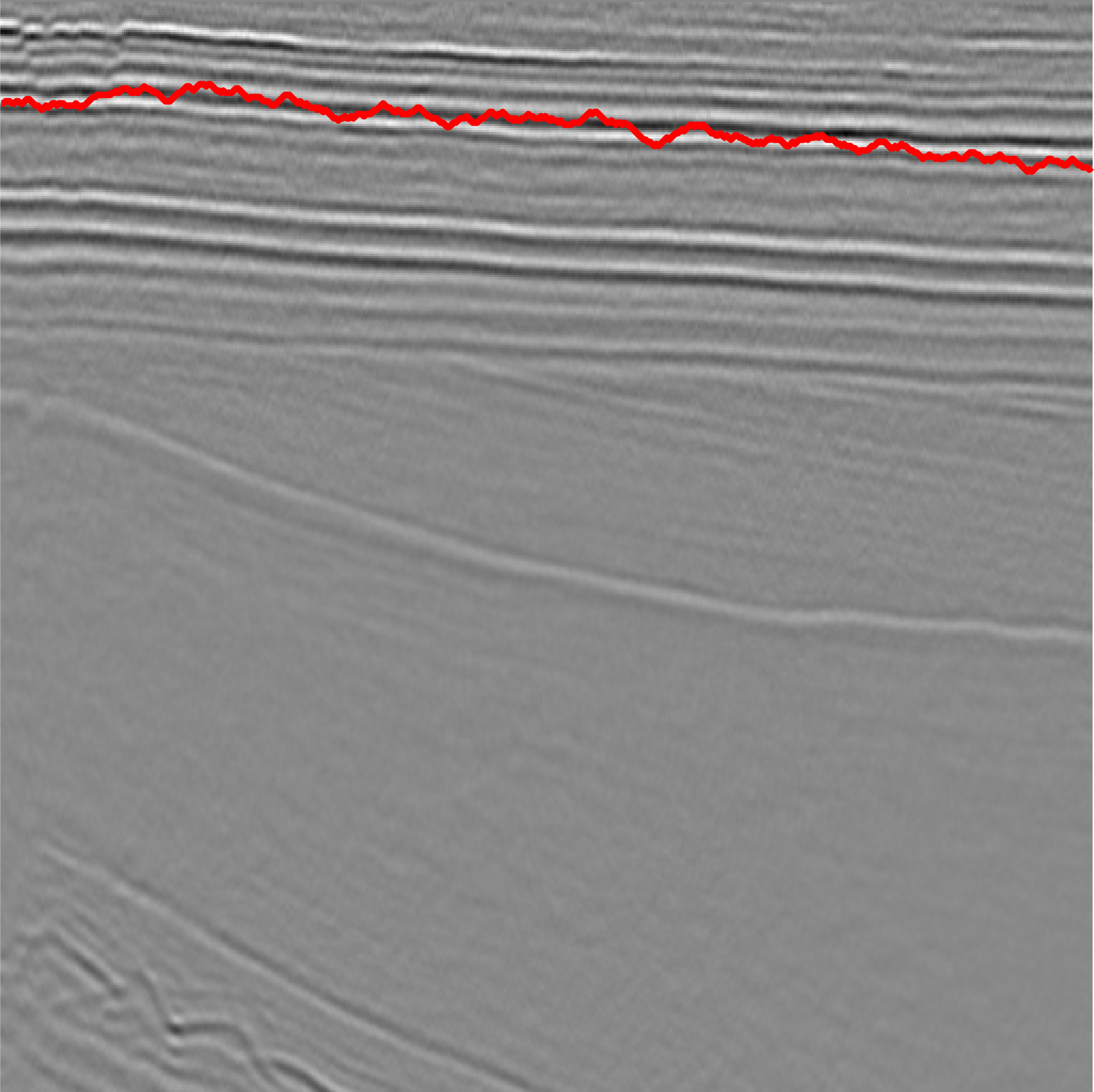}
\label{fig:PSNR_path}}
\caption{Seismic denoising using ORKA: (a) PSNR value of denoised image vs PSNR value of noisy version for ORKA and Shearlets; (b) highly noised seismic data with PSNR close to $0$; (c) tracked seismic wave (red line) on top of original data.}
\end{figure*}

\subsection{Video data}

In this section we apply ORKA to video data. The used videos are presented in Figure \ref{fig:videos} by their first and last frame. We choose two videos from the "UCF Sports Action Data Set" from the UCF Center of Research in Computer Vision \cite{Rodriguez08,Soomro14}. The first video shows a simple scene where a person is running from left to right with a building in the background. The second video is a much more complicated scene from a football match. Here, several players, the ball, and the background are all moving while the camera is rotating from left to right. The background, persons, and ball form the objects of each video that we want to extract using ORKA. The data $\bm{D}$ is the video, which is three dimensional now.
	
\begin{figure*}
\centering
\subfloat[]{\begin{minipage}{.4\textwidth}
\includegraphics[width=\textwidth]{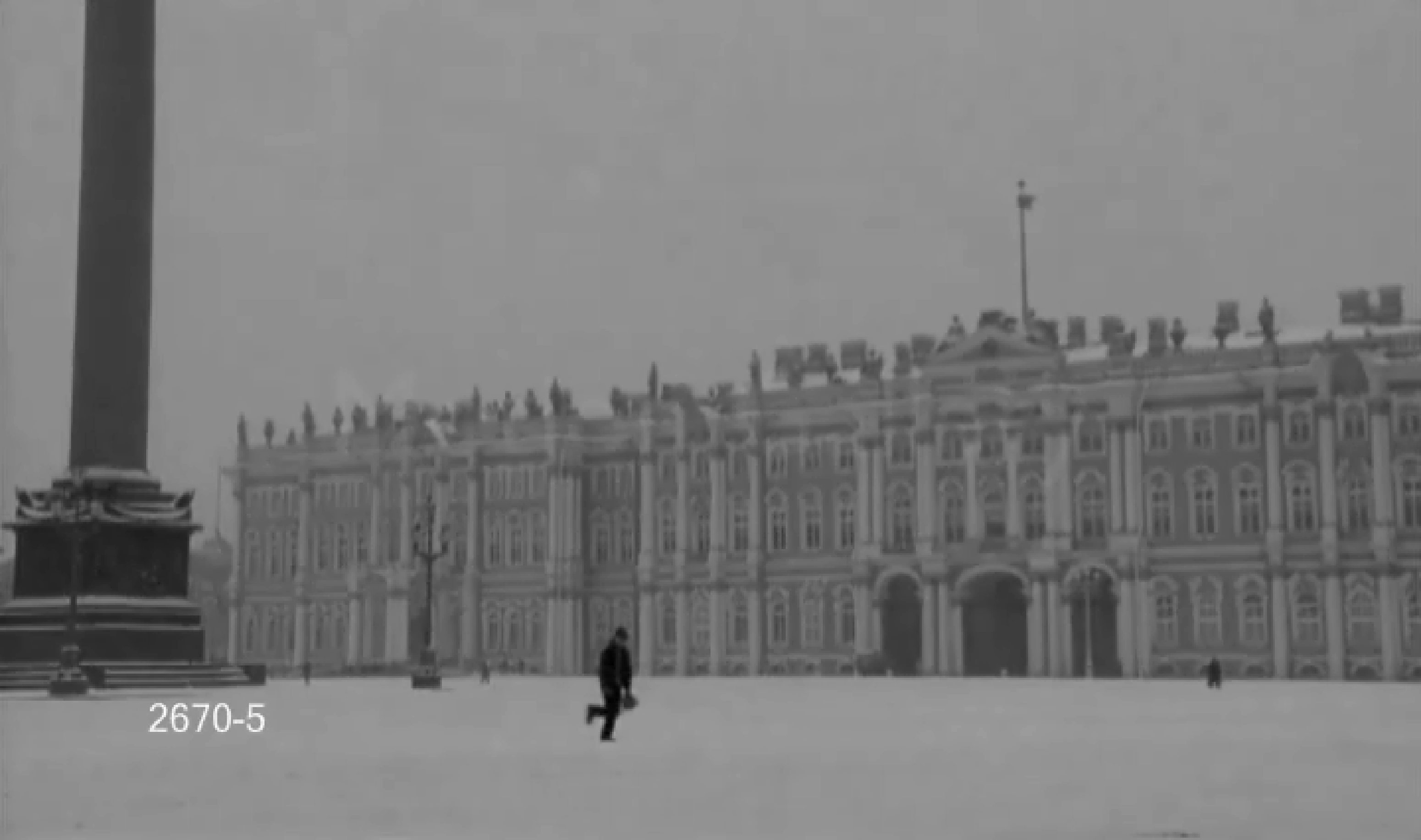}\\[0.1cm]
\includegraphics[width=\textwidth]{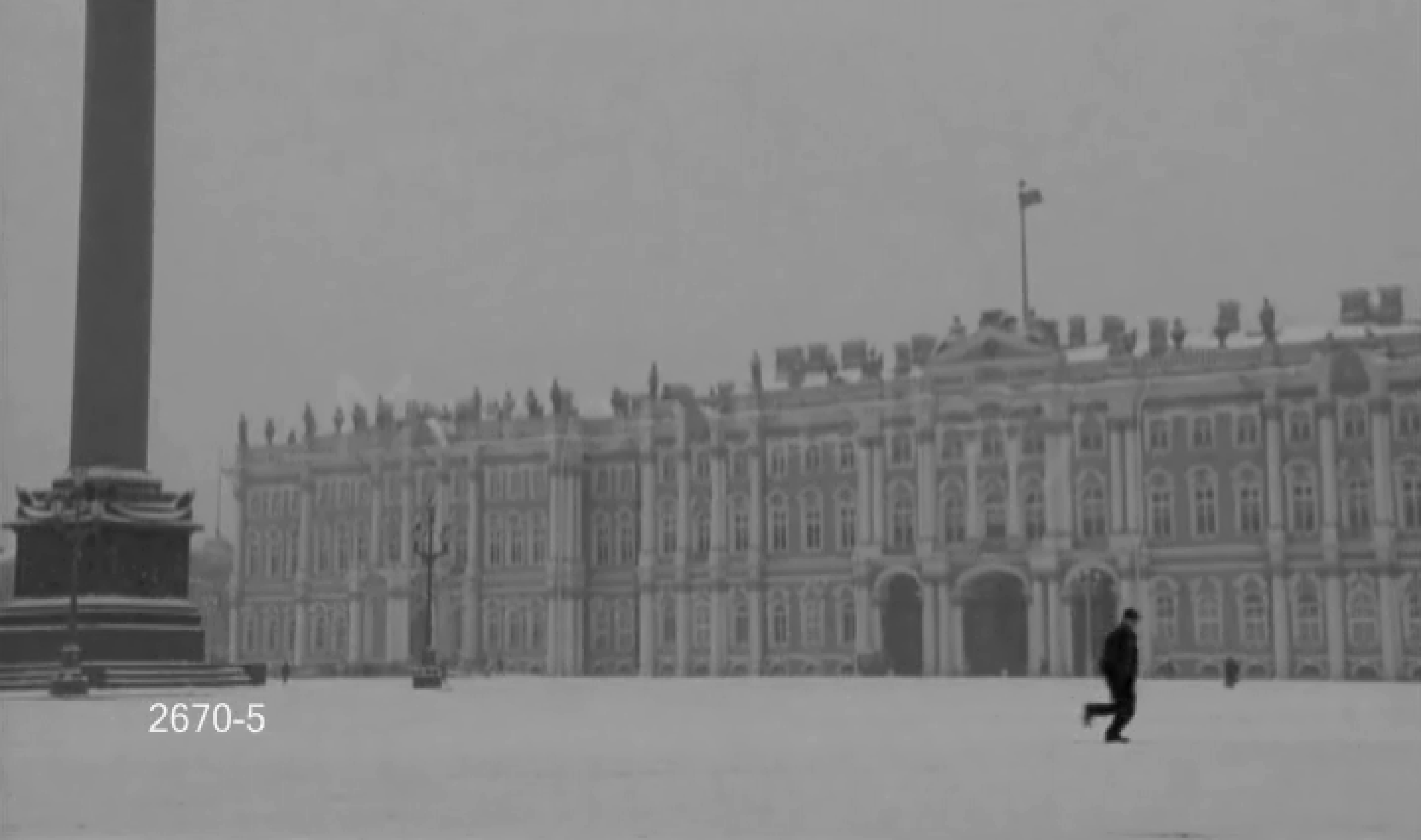}
\end{minipage}\label{fig:videos_run}}\hspace*{0.1cm}
\subfloat[]{\begin{minipage}{.4\textwidth}
\includegraphics[width=\textwidth]{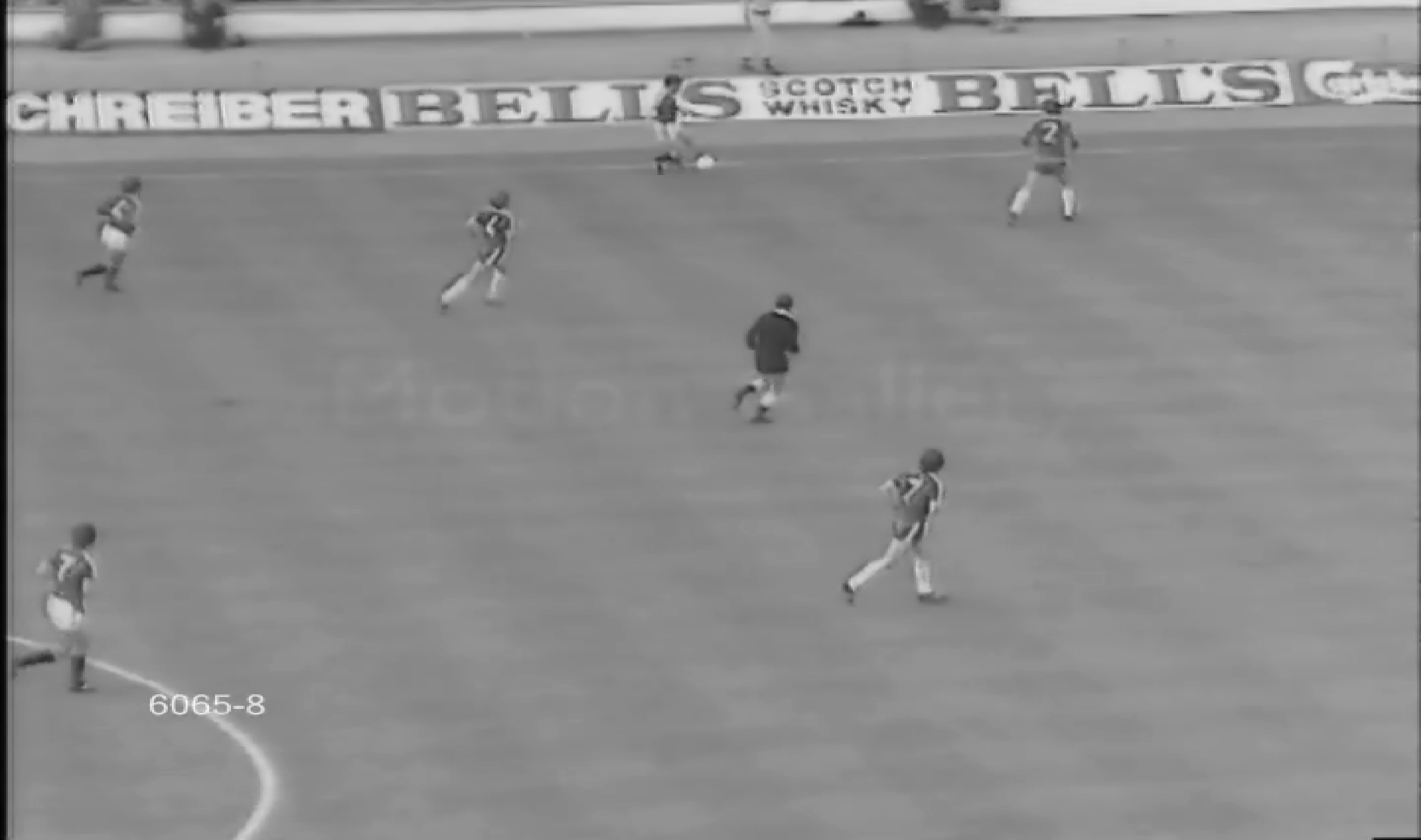}\\[0.1cm]
\includegraphics[width=\textwidth]{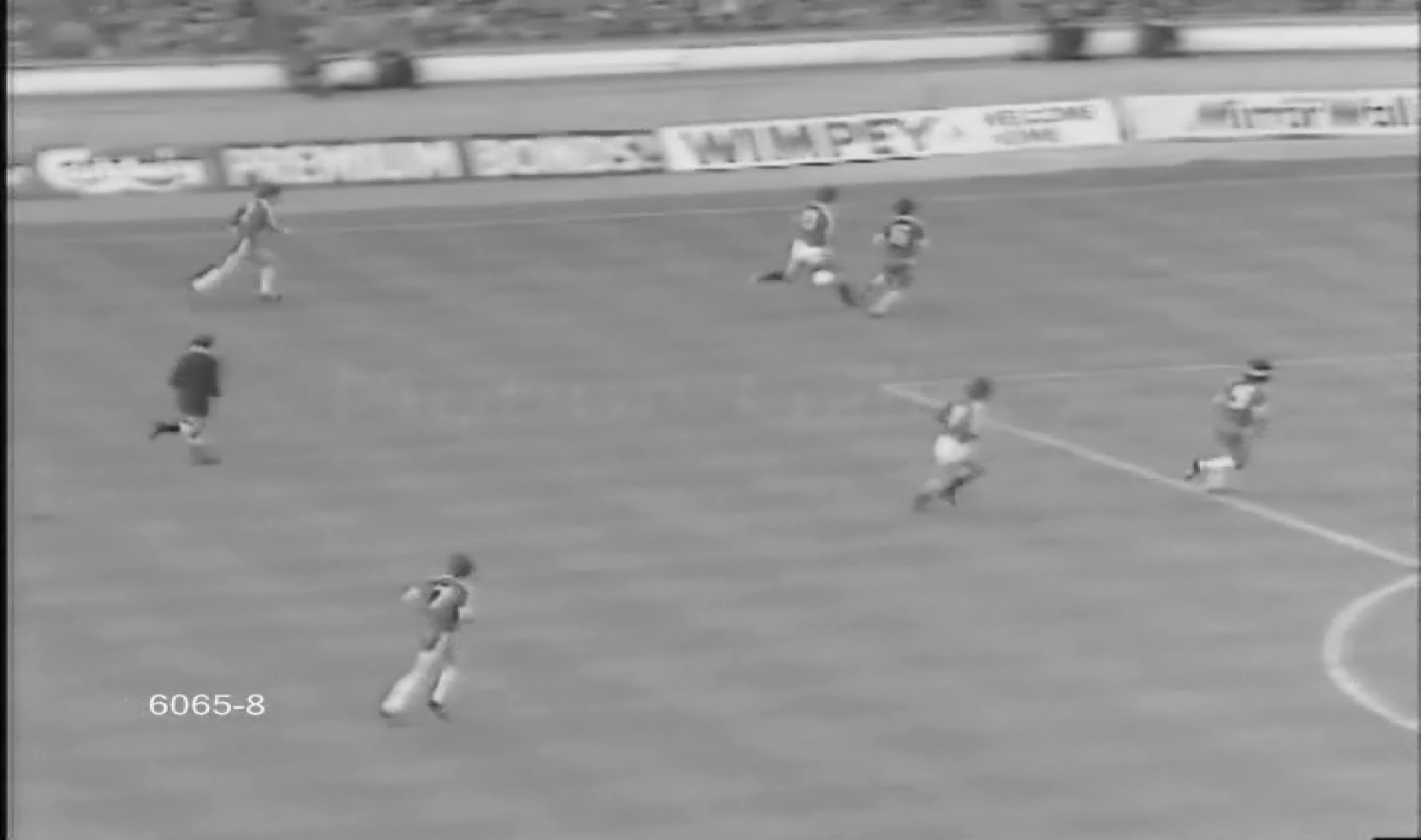}
\end{minipage}\label{fig:videos_soccer}}
\caption{Videos from "UCF Sports Action Data Set" used in this section: (a) running person with constant background and (b) football match scene. The images show the first and the last frame of the video.}
\label{fig:videos}
\end{figure*}
	
For video processing, we consider each frame as a measurement. Different from the 2D data case, an object in a video can move in two dimensions. Hence, we need to update our method for this case. This is actually quite simple by considering two shift vectors $\bm{\lambda}^x$ and $\bm{\lambda}^y$ that give the shift in $x$- or $y$-direction separately Each shift vector must be Lipschitz continuous with constant $C$. For each frame, we now have $(2C+1)^2$ possible movements. This means, there are $(2C+1)^{2(K-1)}$ vertices per graph partition and the complexity of ORKA goes up to $(2C+1)^{2K}$. Hence, we need to choose a smaller $K$ to still have a decent performance.

First, we use ORKA on the running person video (Figure \ref{fig:videos_soccer}). This video consists of two objects: the background and the person. Hence, we use two iterations to get both objects. As the runner moves quite fast, we need to choose $C=5$ and thus $K=4$ resulting in more than $200$ million nodes per graph partition. We use $\mu=\infty$ for the first iteration to force a constant object (static background). To reconstruct the running person we need to choose a small $\mu=1$ as the person changes its outline while moving. The reconstructions are shown in Figure \ref{fig:runner}. ORKA perfectly separated background and person. Figure \ref{fig:runner_f1} and \ref{fig:runner_f2} show the reconstructed running person in two different frames. Note that the outline in both frames is different (e.g., the legs position). Capturing this change in form is only possible due to the model used in this work and was not possible with the shifted rank-1 approach.

\begin{figure*}
\centering
\subfloat[]{
\includegraphics[width=.4\textwidth]{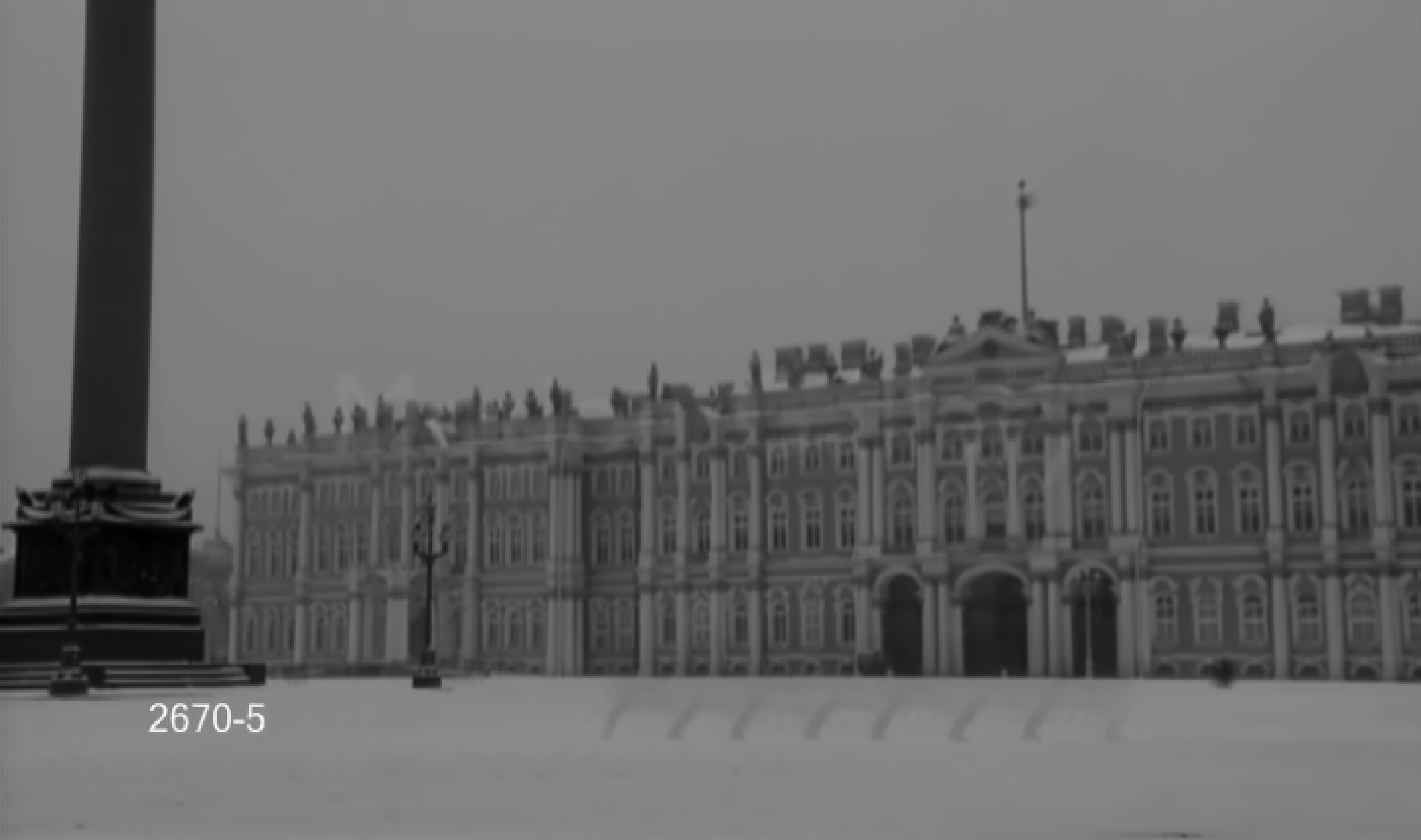}}\\[0.1cm]
\subfloat[]{
\includegraphics[width=.4\textwidth]{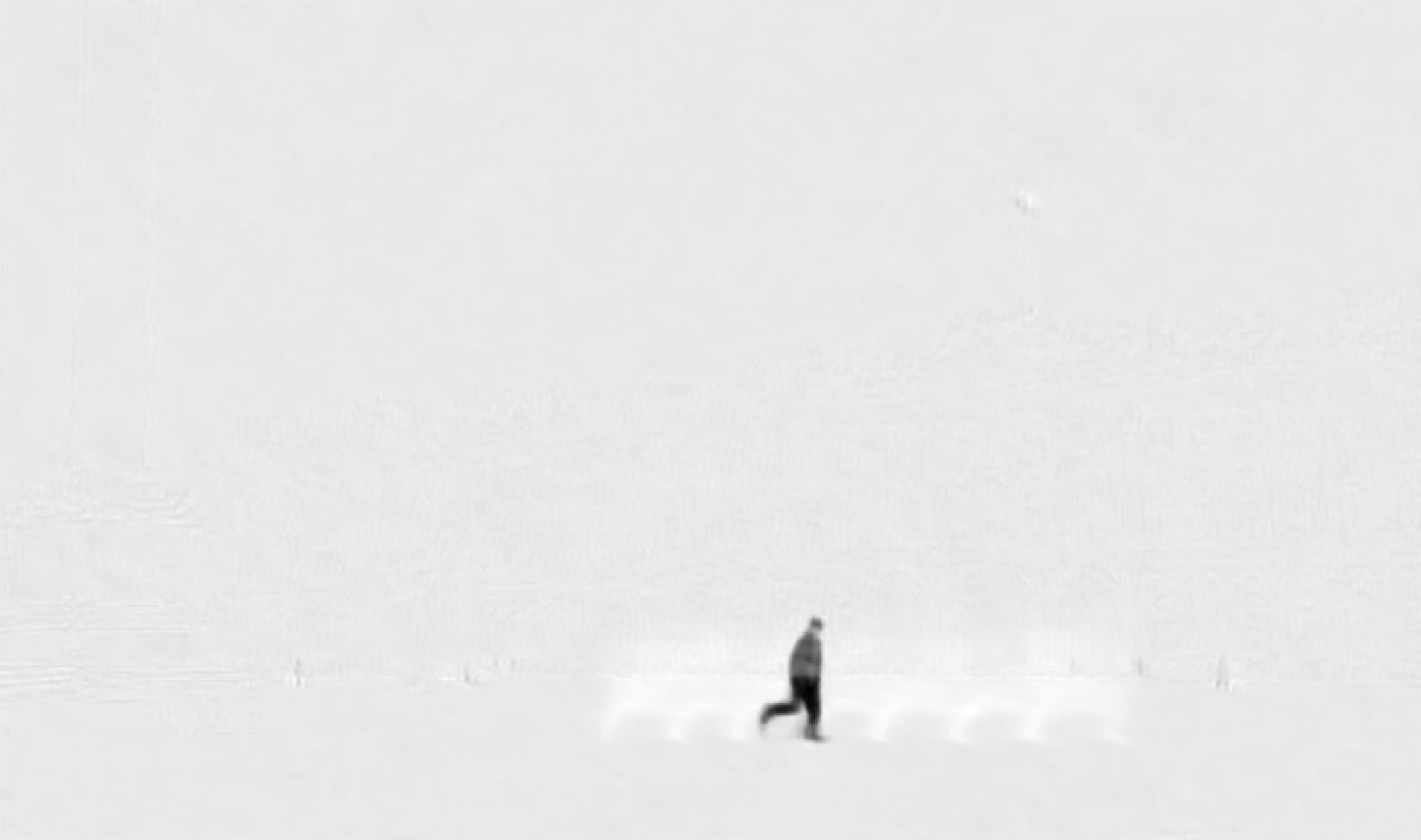}
\label{fig:runner_f1}}
\subfloat[]{
\includegraphics[width=.4\textwidth]{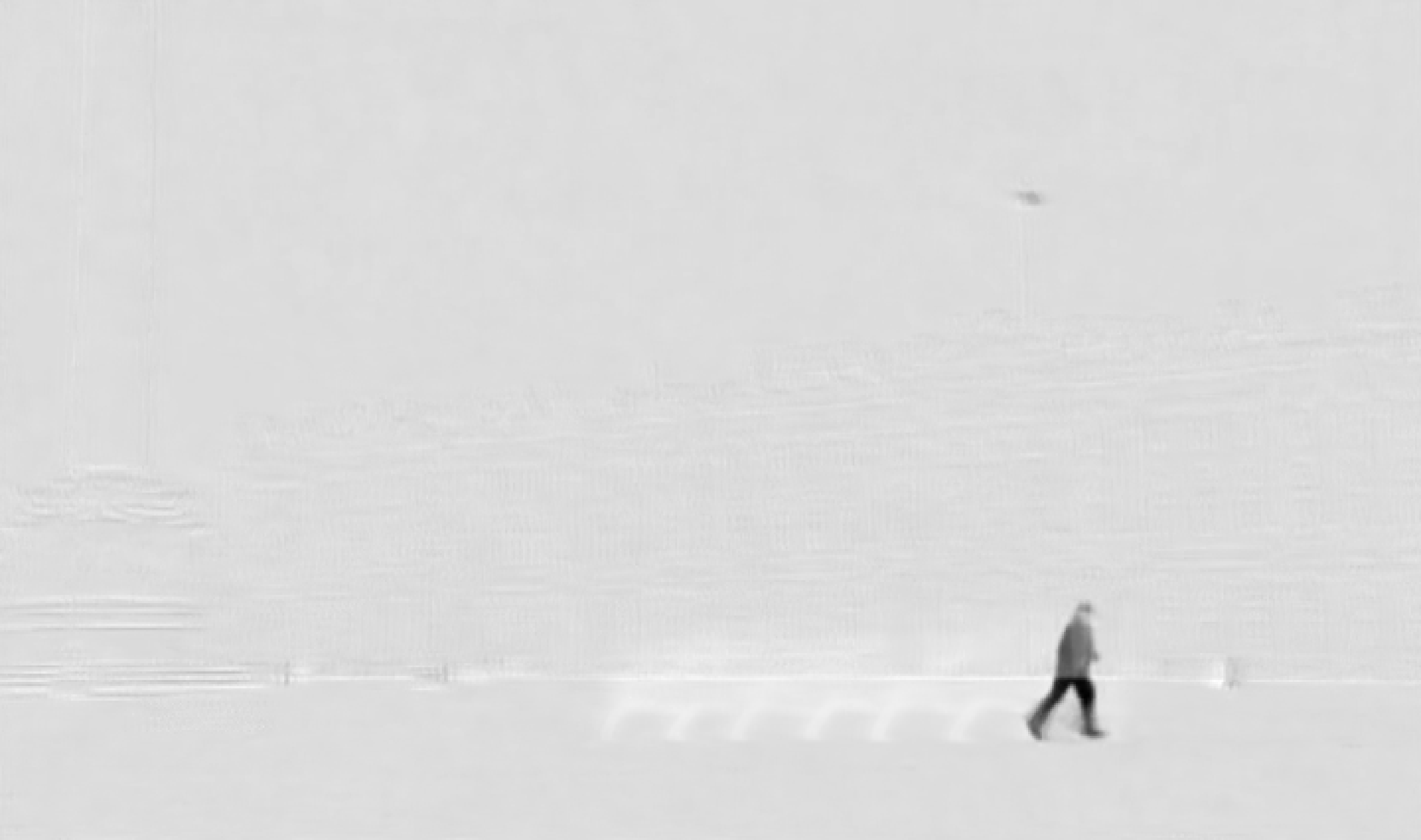}
\label{fig:runner_f2}}
\caption{(a) Reconstructed background and (b,c) person of the from video Figure \ref{fig:videos_run} using ORKA. The changing outline of the runner can be captured as e.g., the legs in both frames (b,c) are in a different position.}
\label{fig:runner}
\end{figure*}

The football video shows a much more complicated scenery. First, as players and camera are moving very fast, we need to choose a high Lipschitz constant $C=12$. We choose $K=3$ leading to $25^6\approx244$ million nodes per partition. Increasing $K$ by one would lead to $625$ times more nodes and is way to complex for the system in use. Still, with this setup the calculation took less than two minutes on an Intel i7-10750H 2.6Ghz with only 16GB memory. The next problem is setting the parameter $\mu$. As the players and background are constantly changing due to the camera and player movement, we would like to choose a small parameter. On the other hand, a small $\mu$ could lead to different objects fading into each others or even not being separated at all. After some testing we stick to $\mu=500$ as the best choice. The first three objects obtained with ORKA in this parameter setting are shown in Figure \ref{fig:football}. The first object is again the background. This time, it is the general background pattern that was found without showing any details. We also find the time stamp of the camera here and some blurring effects from the moving players. The second object contains the perimeter advertising as well as the field pattern. These are the static parts of the scenery that only move in the video whenever the camera is moving. The third reconstructed object contains the players and referee. As we were watching the video closely, it became clear that all players and the referee are running at about the same speed in about the direction. ORKA detects the objects by their movement pattern. The algorithm does not force any locality constraints like compact support on the object. Thus, it clusters everything that moves at about the same speed (and direction) into one object, like perimeter advertising and field pattern, or all players and referee. We also note that some players can be seen more clearly than others. Those players are more inline with the calculated object movement (i.e., the average players speed and direction). Including more constraints into the model, such as forcing compact support on $\bm{U}$, could help to avoid this clustering.

\begin{figure*}
\centering
\subfloat[]{
\includegraphics[width=.4\textwidth]{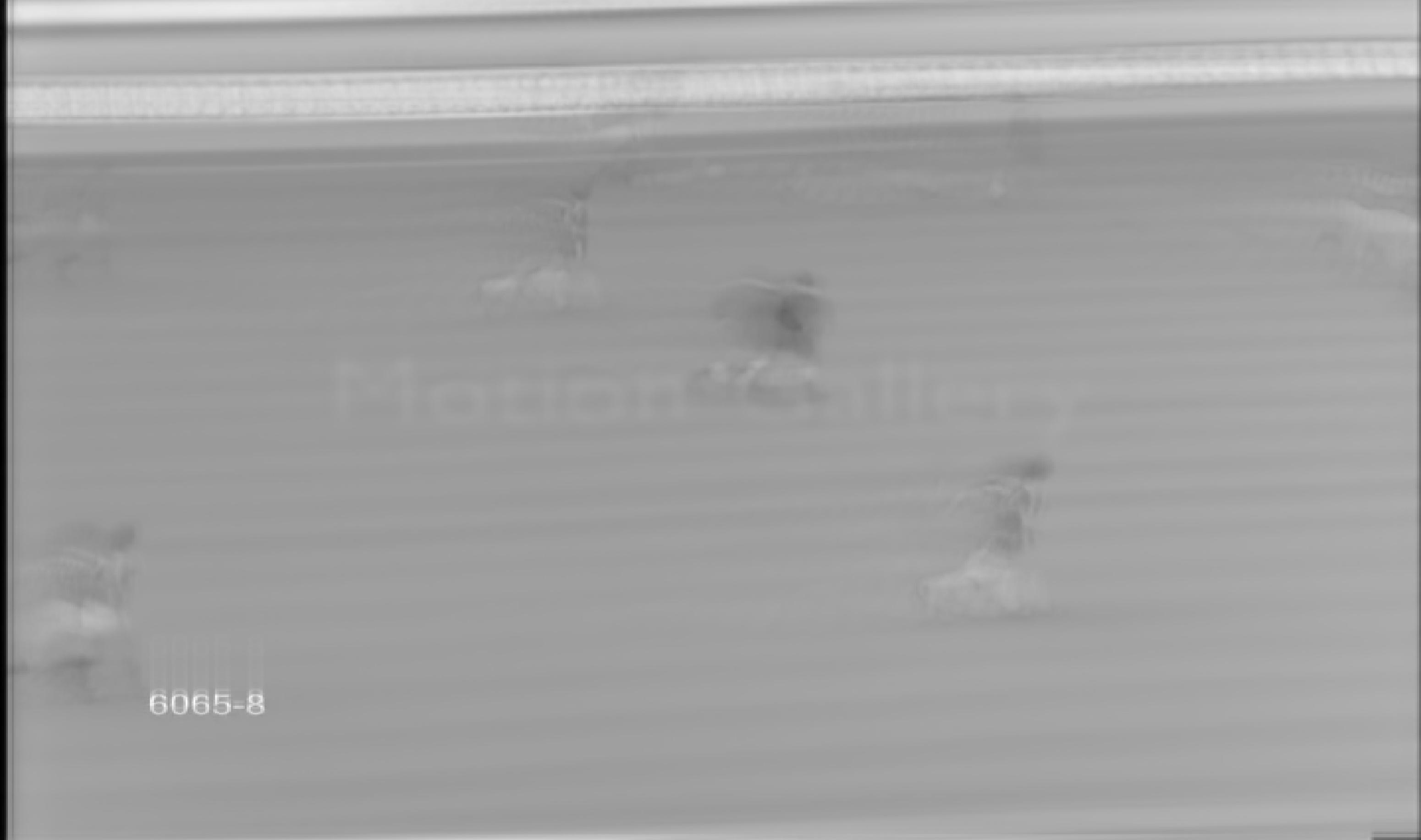}
}\\
\subfloat[]{
\includegraphics[width=.4\textwidth]{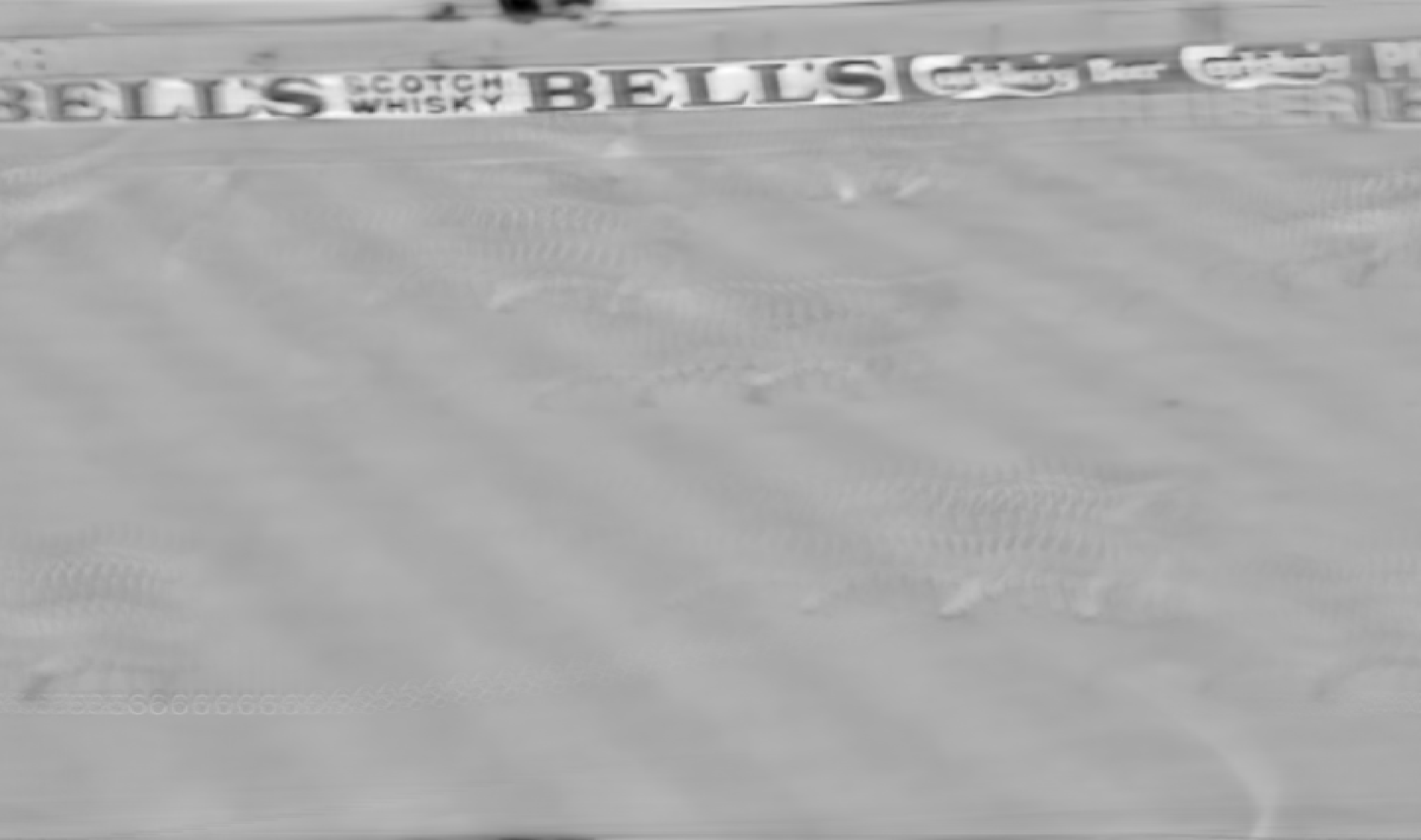}
}\subfloat[]{
\includegraphics[width=.4\textwidth]{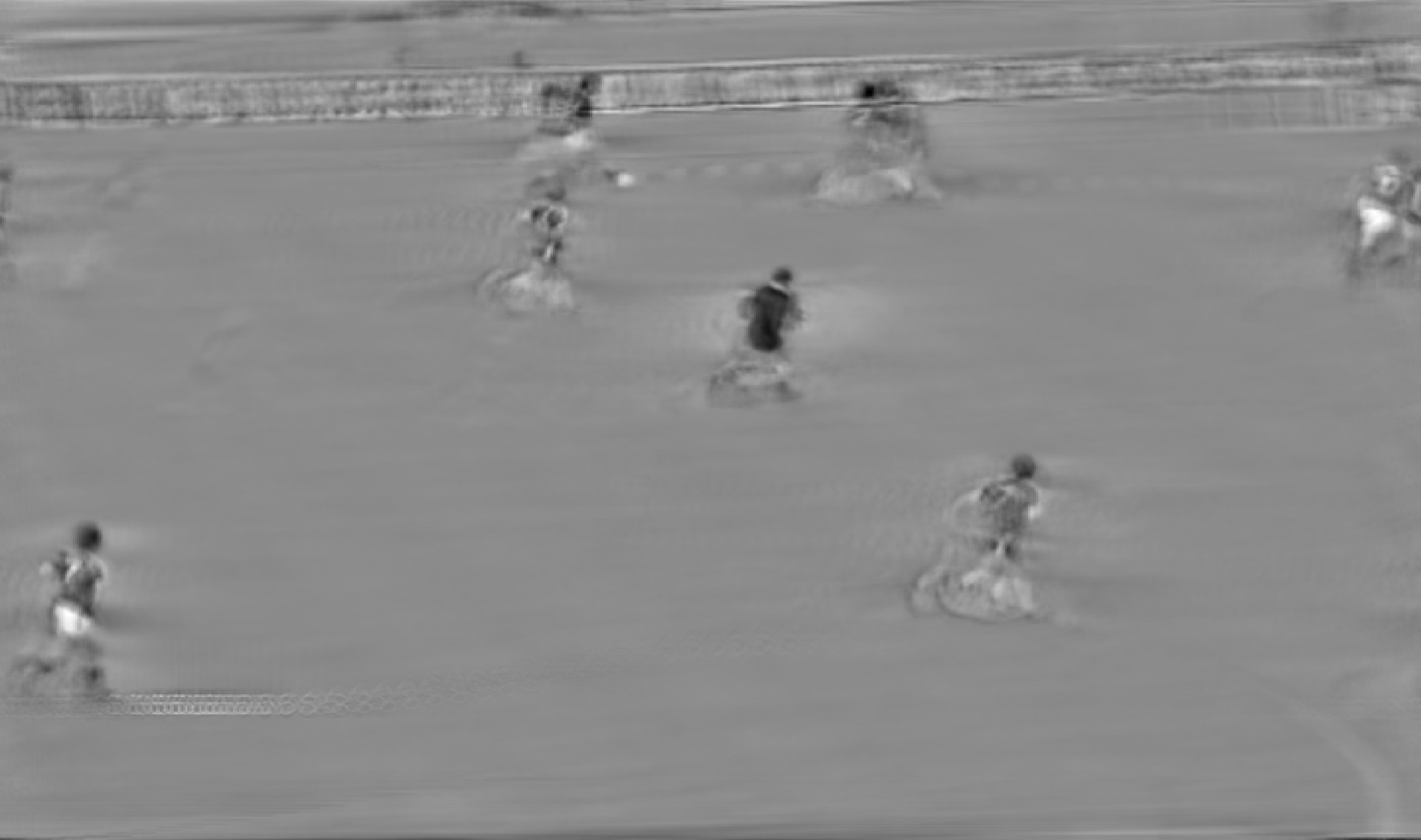}}
\caption{Reconstructed objects after three iterations of ORKA on the football video: (a) general background, (b) perimeter advertising, and (c) players/referee.}
\label{fig:football}
\end{figure*}

\section{Conclusion}

We have presented a new algorithm for object reconstruction in multiple measurements. Therefore, we modeled an object as any kind of structure that can move through the measurements, i.e., it can change position and form from one measurement to the other. Thereby, the model assures that the path formed by the object positions is Lipschitz-smooth and the total change in form over all measurements is bounded. These assumptions are reasonable for many applications. We analyzed the according combinatorial optimization problem and gave an approximate solution based on a $K$-approximation graph. The presented algorithm then solved the longest-path problem on the constructed graph. We were also able to prove approximation error bounds.

We numerically confirmed the theoretical complexity and approximation behavior of our technique. Moreover, we demonstrated the benefits of the new method on practical applications: seismic exploration and non-destructive testing. Last, we also showed that the algorithm can also be applied to multi-dimensional data such as video processing.

While the model is already quite sophisticated, we still are eager to improve it in the future. Right now, the recovered objects are not limited in size. This often leads to a blurry object boundary instead of sharp contours, or several objects with the same movement pattern being grouped together. Hence, forcing compact support on the object, or switching from $2$-norm to $1$-norm when calculating the total change are interesting future research topics. Also, taking the difference between more than one neighboring column in the total change into account could lead to more sophisticated structures. Restrictions others than Lipschitz continuity on the path might also be reasonable depending on the application. Last, the exponential runtime of the algorithm for large $C$ is a drawback especially for multi-dimensional data and needs to be addressed further.

However, we think that the answers to the above considerations are highly dependent on the concrete application task. This work is therefore also a proof of concept that these kind of combinatorial quadratic optimization problems can be handled efficiently and the resulting techniques are viable. It can be used as blueprint to transfer the ideas to specific tasks and refine the model by including any given a-priori information about the structure or movement of the objects.

\bibliographystyle{unsrt}
\bibliography{references}

\end{document}